\theoremstyle{definition}
\newtheorem*{defn*}{Definition}
\theoremstyle{remark}
\theoremstyle{plain}
\newtheorem*{thm*}{Theorem}
\newcommand{\norm}[1]{\ensuremath{\left\Vert #1 \right\Vert}}
\newcommand{\infabs}[1]{\ensuremath{\left\vert #1 \right\vert}}
 \newtheorem{theorem}{Theorem}
 \newtheorem{lemma}{Lemma}
 \newtheorem{corollary}{Corollary}
\newcommand{\mc}{\mathcal}
 \newcommand{\rar}{\rightarrow}
 \newcommand{\B}{\mc{B}}
 \newcommand{\D}{\mc{D}}
 \newcommand{\M}{\mc{M}}
 \newcommand{\PP}{\mc{P}}
 \newcommand{\MM}{\mathfrak{M}}
 \newcommand{\EE}{\mathfrak{E}}
 \newcommand{\R}{\mathbb{R}}
 \newcommand{\N}{\mathbb{N}}
 \newcommand{\Q}{\mathbb{Q}}
 \newcommand{\Z}{\mathbb{Z}}
 \newcommand{\udim}{\overline{\mathrm{dim}}}
\newcommand{\bad}{\mathrm{Bad}}
\title[Mixed Littlewood for pseudo-absolute values]{The mixed Littlewood conjecture for pseudo-absolute values}
\author{Stephen Harrap}
\address{S. Harrap, Department of Mathematics, University of York,
  Heslington, York YO10 5DD, United Kingdom}
\author{Alan Haynes}
\address{A. Haynes, Department of Mathematics, University of Bristol, University Walk,
  Bristol BS8 1TW, United Kingdom}
\subjclass[2000]{37A45, 11K60, 11J83, 11J86}
\keywords{Littlewood conjecture, measure rigidity, linear forms in logarithms}
\thanks{AH:~Research supported by EPSRC grant EP/F027028/1, and by a fellowship from the Heilbronn Institute for Mathematical Research}
\begin{document}

\begin{abstract}
In this paper we study the Mixed Littlewood Conjecture with pseudo-absolute values. We show that if $p$ is a prime and $\D$ is a pseudo-absolute value sequence satisfying mild conditions then
\[\inf_{n\in\N} n|n|_p|n|_\D\|n\alpha\|=0\quad\text{for all}\quad\alpha\in\R.\]
Our proof relies on a measure rigidity theorem due to Lindenstrauss and lower bounds for linear forms in logarithms due to Baker and W\"{u}stholz. We also deduce the answer to the related metric question of how fast the infimum above tends to zero, for almost every $\alpha$.
\end{abstract}

\maketitle

\section{Introduction}
\label{sec:introduction}

For $\alpha\in\R$ let $\|\alpha\|$ denote the distance from $\alpha$ to the nearest integer. The Littlewood Conjecture is the assertion that for every $\alpha , \beta\in\R$,
\begin{equation}\label{eqn:Littlewood}
	\inf_{n\in\N} n \norm{n \alpha} \norm{n\beta} \: = \: 0.
\end{equation}
This conjecture has come to light recently because of its connection to measure rigidity problems for diagonal actions on the space of unimodular lattices. This connection was exploited by Einsiedler, Katok, and Lindenstrauss \cite{EKL} to show that the set of pairs $(\alpha, \beta)\in\R^2$ which do not satisfy (\ref{eqn:Littlewood}) has Hausdorff dimension zero.

More recently de Mathan and Teulié \cite{dMT} have proposed a problem which is closely related to the Littlewood Conjecture. Let $\mathcal{D}=\{n_k\}_{k\ge 0}$ be an increasing sequence of positive integers with $n_0=1$ and $n_k|n_{k+1}$ for all $k$. We refer to such a sequence as a {\em pseudo-absolute value sequence}, and we define the {\em $\mathcal{D}$-adic pseudo-absolute value} $|\cdot|_\D:\N\rar \{n_k^{-1}:k\ge 0\}$ by
 \[
	\infabs{n}_\mathcal{D} = \min\{ n_k^{-1} : n\in  n_k\mathbb{Z} \}.
\]
In the case when $\D=\{a^k\}_{k=0}^\infty$ for some integer $a\ge 2$ we also write $|\cdot|_\D=|\cdot|_a$. If $p$ is a prime then $|\cdot|_p$ is the usual $p-$adic absolute value.

The de Mathan and Teulié Conjecture, which we will refer to as the Mixed Littlewood Conjecture, is the assertion that for any pseudo-absolute value $|\cdot|_\D$ and for every $\alpha\in\R$,
\begin{equation}\label{eqn:MixedLittlewood}
	\inf_{n\in\N} n \infabs{n}_\mathcal{D}\norm{n \alpha}  \: = \: 0.
\end{equation}
The distribution of values of the quantities $|n|_\D$ mimics the distribution of values of $\|n\beta\|$, for suitably chosen $\beta$. In the case when $\D=|\cdot|_a$ for some integer $a\ge 2$ the Mixed Littlewood Conjecture also has a dynamical formulation in terms of the action of a certain diagonal group on a quotient space of
\[\mathrm{SL}_2(\R)\times\prod_i\mathrm{SL}_2(\Q_{p_i}),\]
where $\{p_i\}$ is the collection of primes dividing $a$. By employing measure rigidity results in this setting Einsiedler and Kleinbock \cite{EK} proved that when $|\cdot|_\D=|\cdot|_a$ the set of $\alpha\in\R$ which do not satisfy (\ref{eqn:MixedLittlewood}) has Hausdorff dimension zero.

The case of the Mixed Littlewood Conjecture with more than one $p-$adic or pseudo-absolute value has also been a topic of recent interest. If $\D_1$ and $\D_2$ are two pseudo-absolute value sequences then it is reasonable to conjecture that for any $\alpha\in\R$,
\begin{equation}\label{eqn:MixedLittlewood2}
\inf_{n\in\N}n|n|_{\D_1}|n|_{\D_2}\|n\alpha\|=0.
\end{equation}
It is shown in \cite{EK} that the Furstenberg Orbit Closure Theorem \cite[Theorem IV.1]{Fur} implies that (\ref{eqn:MixedLittlewood2}) is true whenever $\D_1=\{a^k\}$ and $\D_2=\{b^k\}$ for two multiplicatively independent integers $a$ and $b$. This result was strengthened by Bourgain, Lindenstrauss, Michel, and Venkatesh \cite{BLMV} who proved a result which implies (see \cite[Section 4.6]{BHV}) that there is a constant $\kappa>0$ such that for all $\alpha\in\R$,
\[\inf_{n\in\N}n(\log\log\log n)^\kappa|n|_a|n|_b\|n\alpha\|=0.\]
These results rely on understanding the dynamics of semigroups of toral endomorphisms. They provide a contrast to the situation of the original Littlewood Conjecture, where nothing seems to be gained by adding more real variables.

It was pointed out by Einsiedler and Kleinbock in \cite{EK} that the dynamical machinery used to study these problems does not readily extend to the case of more general pseudo-absolute values. Our first result in this paper demonstrates how recent measure rigidity theorems can be combined with bounds for linear forms in logarithms to obtain more general results.
\begin{theorem}\label{thm:psav}
Suppose that $a\ge 2$ is an integer and that $\D=\{n_k\}$ is a pseudo-absolute value sequence all of whose elements are divisible by finitely many fixed primes coprime to $a$. If there is a $\delta\ge 0$ with
\begin{equation}\label{eqn:psavthmhyp}
\log n_k\le k^\delta~\text{ for all }~k\ge 2,
\end{equation}
then for any $\alpha\in\R$ we have that
\begin{equation}\label{eqn:psavthmconc}
\inf_{n\in\N}n|n|_a|n|_\D\|n\alpha\|=0.
\end{equation}
\end{theorem}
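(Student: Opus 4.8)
The plan is to follow the dynamical strategy pioneered by Einsiedler and Kleinbock, but to replace the single pseudo-absolute value $|\cdot|_a$ by the pair $(|\cdot|_a, |\cdot|_\D)$ and then extract the extra arithmetic input from Baker--W\"{u}stholz. Write $b_1, \dots, b_r$ for the finitely many primes dividing the elements of $\D$, all coprime to $a$, and let $p_1, \dots, p_s$ be the primes dividing $a$. Set $S = \{p_1,\dots,p_s,b_1,\dots,b_r\}$. The quantity $n|n|_a|n|_\D\|n\alpha\|$ is, up to bounded multiplicative constants, controlled by the behaviour of the lattice $g_t u_\alpha \Z^{\,\text{(\,$S$-arithmetic\,)}}$ in the space $X_S = \SL_2(\R)\times\prod_{p\in S}\SL_2(\Q_p)/\Gamma$, where $g_t$ is the appropriate diagonal element expanding the real coordinate and the $\Q_{p}$-coordinates at the rates dictated by $a$, and $u_\alpha$ is the unipotent encoding $\alpha$. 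The first step is therefore to set up this correspondence carefully: I would show that \eqref{eqn:psavthmconc} fails for a given $\alpha$ if and only if the corresponding orbit $\{g_t u_\alpha \Gamma\}$ is bounded (stays in a fixed compact set) in $X_S$, exactly as in the Einsiedler--Kleinbock reduction, with the product over $S$ now including the $b_i$-places.

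Second, I would invoke Lindenstrauss's measure classification theorem (measure rigidity for the diagonal action, with the positive-entropy hypothesis) to analyse the possible orbit closures, or rather to analyse an invariant measure supported on a hypothetical bounded orbit obtained as a weak-$*$ limit of orbit segments. The conclusion one wants from measure rigidity is that such a bounded orbit must be \emph{homogeneous} in a strong sense --- roughly, that $\alpha$ would have to be extremely well approximable by rationals of a very rigid arithmetic type, i.e.\ $\|n_k q \alpha\|$ would have to decay faster than any prescribed rate along the sequence $n_k$, with $q$ ranging over $S$-units. This is the step where one imports rigidity; it tells us that a counterexample $\alpha$ forces a super-strong Diophantine condition relating $\alpha$ to the integers $n_k$ and to powers of $a$ and the $b_i$.

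Third --- and this is where the hypothesis \eqref{eqn:psavthmhyp} and the linear forms in logarithms enter --- I would derive a contradiction from that super-strong Diophantine condition by a counting/pigeonhole argument combined with a lower bound for $|a^{m}b_1^{c_1}\cdots b_r^{c_r} - 1|$, or more precisely for nonzero integer linear combinations of logarithms $|m\log a + c_1 \log b_1 + \dots + c_r\log b_r|$, supplied by the Baker--W\"{u}stholz theorem. The point is that the rigidity step produces denominators that are $S$-units lying very close (in archimedean distance, after scaling) to multiples of the $n_k$; Baker--W\"{u}stholz bounds from below how close a product of powers of $a$ and the $b_i$ can come to such an integer, and this lower bound is polynomial in the heights, i.e.\ polynomial in $\log n_k$ and in the exponents. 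The growth restriction $\log n_k \le k^\delta$ guarantees that the exponents appearing are at most polynomial in $k$, so the Baker--W\"{u}stholz lower bound decays only like $k^{-O(1)}$ (times a constant), whereas the rigidity-forced approximation demands decay faster than any polynomial in $k$; these are incompatible, giving the desired contradiction and hence \eqref{eqn:psavthmconc}.

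The main obstacle I anticipate is the precise interface between the measure-rigidity output and the linear-forms-in-logarithms input: Lindenstrauss's theorem naturally produces a statement about invariant measures and escape of mass, and one must quantify it to extract an \emph{effective} (in $k$) Diophantine inequality of exactly the shape that Baker--W\"{u}stholz can refute. Making the entropy/positive-dimension hypotheses of the measure classification genuinely hold for the limit measure (ruling out the zero-entropy case, which would correspond to the orbit degenerating into a lower-dimensional homogeneous subspace in a way that does \emph{not} immediately contradict boundedness) is the delicate point, and it is presumably handled by exploiting that the $\Q_{b_i}$-coordinates are \emph{not} being acted on by the diagonal flow --- so any invariance picked up there is "for free" and forces the arithmetic rigidity --- while the growth condition \eqref{eqn:psavthmhyp} controls the quantitative cost. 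I would expect the bulk of the technical work to be in this reduction, with the archimedean contradiction via Baker--W\"{u}stholz being comparatively short once the inequality is in hand.
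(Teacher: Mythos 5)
Your proposal has a genuine gap at its very first step, and that gap is precisely the obstruction that motivates the paper's different route. The reduction of \eqref{eqn:psavthmconc} to boundedness of a diagonal orbit in $\mathrm{SL}_2(\R)\times\prod_{p\in S}\mathrm{SL}_2(\Q_p)/\Gamma$ works for $|\cdot|_a$ because the values of $|n|_a$ are read off from a \emph{fixed} diagonal flow at the places dividing $a$. For a general pseudo-absolute value $|\cdot|_\D$ supported on the primes $b_1,\dots,b_r$, the value $|n|_\D$ is governed by which of the $n_k$ divide $n$, and the exponent vectors of the $n_k$ in $b_1,\dots,b_r$ can grow in an essentially arbitrary, non-linear pattern; there is no fixed one-parameter diagonal subgroup at the $b_i$-places whose excursions encode $|n|_\D$. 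This is exactly the point made by Einsiedler and Kleinbock (and quoted in the introduction): the homogeneous-dynamics machinery does not readily extend to general $\D$. So the ``if and only if'' correspondence you assert in step one does not exist in the generality of the theorem, and the rest of the argument has nothing to stand on.

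Beyond that, the two ingredients you do identify (Lindenstrauss-type rigidity and Baker--W\"ustholz) are deployed in the wrong places, and the issue you yourself flag as delicate --- establishing positive entropy --- is the actual crux and is left unresolved in your sketch. The paper works entirely on $\R/\Z$: assuming \eqref{eqn:psavthmconc} fails, the closure $A(\alpha)$ of $\{a^\ell n_k\alpha\}$ is a closed, times-$a$ invariant set consisting only of badly approximable points. Baker--W\"ustholz is used \emph{to produce entropy}: it gives a lower bound $\log(t_j/t_i)\ge Ck^{-\delta\kappa}$ for suitably chosen elements $t_i\in\{a^\ell n_k\}$ with nearly equal logarithms (here \eqref{eqn:psavthmhyp} bounds the heights entering the linear form), which yields many $\epsilon$-separated points in $A(\alpha)-A(\alpha)$, hence positive upper box dimension of $A(\alpha)$, hence (via a variational-principle lemma) an ergodic times-$a$ invariant measure of positive entropy on $A(\alpha)$. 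Only then does measure rigidity enter, through the Einsiedler--Fishman--Shapira theorem, which says such a measure must charge well approximable points --- contradicting $A(\alpha)\subseteq\bad(c)$. In your outline, Baker--W\"ustholz is reserved for a final contradiction against an unspecified ``super-strong Diophantine condition'' that Lindenstrauss's theorem does not actually supply, while the positive-entropy hypothesis that rigidity requires as \emph{input} is never produced. You would need to supply the entropy-production mechanism, and once you do, you will find you no longer need the $S$-arithmetic setup at all.
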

Our proof of this theorem is inspired in part by Furstenberg's original proof of his Orbit Closure Theorem \cite{Fur}, and by the ideas used by Bourgain, Lindenstrauss, Michel, and Venkatesh in \cite{BLMV}.  Of particular interest is the case when consecutive elements of the sequence $\D$ have bounded ratios (cf. \cite{BV,BDdM,EK,dMat,dMT}), and we will say that $\D$ and $|\cdot|_\D$ have bounded ratios in this case. This roughly corresponds in the original Littlewood Conjecture to having
\begin{equation*}
\inf_{n\in\N}n\|n\beta\|>0,
\end{equation*}
which is indeed the only interesting case of that conjecture anyway. For the bounded ratios case our theorem gives a quite satisfactory answer to the problem at hand.
\begin{corollary}\label{cor:psav}
Suppose that $a\ge 2$ is an integer and that $\D$ is a pseudo-absolute value sequence with bounded ratios, all of whose elements are coprime to $a$. Then for any $\alpha\in\R$ we have that
\[\inf_{n\in\N}n|n|_a|n|_\D\|n\alpha\|=0.\]
\end{corollary}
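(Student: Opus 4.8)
The plan is to deduce the corollary directly from Theorem \ref{thm:psav}, so the task reduces to checking that a pseudo-absolute value sequence $\D=\{n_k\}$ with bounded ratios, all of whose elements are coprime to $a$, satisfies the two standing hypotheses of that theorem, namely the growth bound (\ref{eqn:psavthmhyp}) and the requirement that only finitely many primes (all coprime to $a$) divide the elements of $\D$.

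First I would record the consequence of bounded ratios for the growth of $\D$. By assumption there is a constant $C\ge 2$ with $n_{k+1}/n_k\le C$ for every $k\ge 0$, and since $n_0=1$ this gives $n_k\le C^k$, hence $\log n_k\le k\log C$ for all $k$. Choosing any $\delta\ge 1$ large enough that $2^{\delta-1}\ge\log C$, we get, for all $k\ge 2$, the chain $\log n_k\le k\log C\le k\cdot k^{\delta-1}=k^\delta$, which is precisely the hypothesis (\ref{eqn:psavthmhyp}).

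Next I would show that bounded ratios forces only finitely many primes to appear among the elements of $\D$. If a prime $q$ divides some $n_k$ but not $n_{k-1}$, then, because $n_{k-1}\mid n_k$, the prime $q$ divides the integer $n_k/n_{k-1}$, and therefore $q\le n_k/n_{k-1}\le C$. Hence every prime dividing any element of $\D$ lies in the finite set of primes not exceeding $C$, and by hypothesis each such prime is coprime to $a$. This verifies the remaining hypothesis of Theorem \ref{thm:psav}, which then applies and yields $\inf_{n\in\N}n|n|_a|n|_\D\|n\alpha\|=0$ for every $\alpha\in\R$.

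Since the whole argument is a routine verification of hypotheses, there is no substantial obstacle; the only mildly subtle point is the observation that a sequence with bounded consecutive ratios can be assembled from only finitely many primes, and it is exactly this observation that makes Theorem \ref{thm:psav} applicable in the bounded ratios setting.
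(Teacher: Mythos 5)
Your deduction is correct and is exactly the intended one: the paper states the corollary as an immediate consequence of Theorem \ref{thm:psav}, and the two verifications you give (bounded ratios implies $\log n_k\le k\log C\le k^{\delta}$ for suitable $\delta$, and every prime first appearing in $n_k$ must divide $n_k/n_{k-1}\le C$, so only finitely many primes occur, all coprime to $a$ by hypothesis) are precisely what is needed. No issues.
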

After establishing Theorem \ref{thm:psav} we will turn to the problem of determining the almost everywhere behavior of the quantities on the left hand side of (\ref{eqn:MixedLittlewood}). The analogue of this problem for the Littlewood Conjecture was established by Gallagher \cite{Gal} in the 1960's. He proved that if
$\psi:\mathbb{N} \rightarrow \mathbb{R}$ is any non-negative decreasing function for which
\begin{equation}\label{eqn:galdivcond}
\sum_{n \in \mathbb{N}} \log(n)\psi(n)=\infty
\end{equation}
then for almost every $(\alpha, \beta)\in\R^2$
\begin{equation}\label{eqn:galconclusion}
	 \norm{n \alpha} \norm{n\beta} \: \le \: \psi(n)~\text{for infinitely many}~n\in\N.
\end{equation}
For example this shows that for almost every $(\alpha,\beta)\in\R^2$ we can improve (\ref{eqn:Littlewood}) to
\[	\inf_{n\in\N} n(\log n)^2(\log\log n) \norm{n \alpha} \norm{n\beta} \: = \: 0.\]
Although Gallagher's method does not readily apply to the mixed problems that we are considering, it has recently been shown using other techniques \cite{BHV} that if $p$ is a prime, if $\psi$ is as above, and if (\ref{eqn:galdivcond}) holds then for almost every $\alpha\in\R$,
\begin{equation*}
	 |n|_p\norm{n \alpha} \: \le \: \psi(n)~\text{for infinitely many}~n\in\N.
\end{equation*}
Here we will show how this result can be extended to non $p-$adic pseudo-absolute values $|\cdot|_\D$. The quality of approximation that we obtain will necessarily depend on the rate at which the sequence $\D$ grows. For this reason, given a pseudo-absolute value sequence $\D$ we define $\M:\N\rar\N\cup\{0\}$ by
\[\M(N)=\max\left\{k: n_k \leq N  \right\}.\]
\begin{theorem}\label{thm:2}
Suppose that $\psi:\mathbb{N} \rightarrow \mathbb{R}$ is non-negative and decreasing and that $\mathcal{D}=\{n_k\}$ is a pseudo-absolute value sequence satisfying
	\begin{equation}\label{eqn:thm2.Dhyp}
		\sum_{k=1}^{\mathcal{M}(N)} \frac{\varphi(n_k)}{n_k}\gg \mathcal{M}(N)~\text{for all}~N\in\N,
	\end{equation}
where $\varphi$ denotes the Euler phi function. Then for almost all $\alpha\in\R$ the inequality
	\begin{equation}
		\label{eqn:weak}
		\infabs{n}_\mathcal{D} \norm{n\alpha} \le \psi(n)
	\end{equation}
	has infinitely (resp.\ finitely) many solutions $n \in \mathbb{N}$ if the sum \begin{equation}\label{eqn:thm2divcond}
\sum_{n=1}^{\infty} \mathcal{M}(n)\psi(n)
\end{equation}
diverges (resp.\ converges).
\end{theorem}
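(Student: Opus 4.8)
The plan is to treat this as a pair of Borel--Cantelli arguments, one for each direction, applied to the sets $A_n = \{\alpha\in[0,1) : |n|_\D\|n\alpha\| \le \psi(n)\}$. The convergence case is the easy half: for each fixed $n$ the set $A_n$ is a union of intervals around the rationals $a/n$ of total measure $\ll |n|_\D^{-1}\psi(n)$. Writing $n = n_k m$ with $|n|_\D = n_k^{-1}$ (taking the largest such $k$), one sums over $m$ coprime to the fixed primes and over $k$; a dyadic decomposition of the range of $n$ shows that $\sum_n |A_n| \asymp \sum_n \M(n)\psi(n)$ up to constants, so convergence of (\ref{eqn:thm2divcond}) forces $\sum_n|A_n|<\infty$ and the standard Borel--Cantelli lemma finishes it. The hypothesis (\ref{eqn:thm2.Dhyp}) is not needed here.

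For the divergence case I would use the Erd\H{o}s--Renyi quasi-independence form of Borel--Cantelli, which requires a lower bound $\sum_{n\le N}|A_n| \to\infty$ together with a quasi-independence estimate of the shape $\sum_{m,n\le N}|A_m\cap A_n| \ll \big(\sum_{n\le N}|A_n|\big)^2$. The first ingredient is where (\ref{eqn:thm2.Dhyp}) enters: when one writes $A_n$ as a union of intervals of radius $\psi(n)/(n|n|_\D)$ centred at reduced fractions $a/n$, the number of such fractions is $\varphi(n)$, so $|A_n|\asymp \varphi(n)\psi(n)/(n|n|_\D)$, and summing over $n=n_km$ the condition (\ref{eqn:thm2.Dhyp}) exactly guarantees that the $\varphi(n_k)/n_k$ factors do not conspire to shrink the sum, giving $\sum_{n\le N}|A_n|\gg \sum_{n\le N}\M(n)\psi(n)$. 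The second ingredient, the quasi-independence bound, is the standard but delicate part: one estimates $|A_m\cap A_n|$ by counting pairs of fractions $a/m$, $b/n$ that are close, which reduces to a divisor-type sum involving $\gcd(m,n)$; the main term contributes $|A_m||A_n|$ and the error terms are controlled using $\sum_{d\mid n}1 \ll n^\varepsilon$ and the monotonicity of $\psi$.

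The main obstacle I expect is the overlap estimate in the divergence half, specifically handling the interaction between the $\D$-adic weight and the $\gcd$ terms simultaneously. When $m = n_jm'$ and $n = n_km''$ with $j\ne k$, the fractions $a/m$ and $b/n$ live on different scales and one must show the local correlations still factorize on average; the divisibility $n_j\mid n_k$ (or vice versa) is what makes the combinatorics tractable, but bookkeeping the resulting sums over $j,k$ and over the coprime-to-the-fixed-primes parts $m',m''$ is the technical heart of the proof. I would organize this by first proving a clean pointwise bound for $|A_m\cap A_n|$ as a sum over common approximation denominators, then summing that bound in two stages --- first over $m',m''$ for fixed $j,k$, then over $j,k$ --- so that (\ref{eqn:thm2.Dhyp}) can be invoked one last time to match the square of the lower bound. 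Once quasi-independence is in hand, Borel--Cantelli gives that (\ref{eqn:weak}) has infinitely many solutions for almost every $\alpha$ in $[0,1)$, and periodicity extends this to all of $\R$.
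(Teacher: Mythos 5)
Your convergence half is fine and matches the paper's (Borel--Cantelli plus the equivalence $\sum_n\lambda(A_n)\asymp\sum_n\M(n)\psi(n)$, which the paper establishes in Lemma \ref{lem:asymp} by partial summation against the estimate $\sum_{n\le N}|n|_\D^{-1}\asymp N\M(N)$). For the divergence half, however, you have taken on far more than you acknowledge, and there is a genuine gap at the quasi-independence step. The bound you assert --- that $|A_m\cap A_n|$ has main term $|A_m||A_n|$ with error terms ``controlled using $\sum_{d\mid n}1\ll n^{\varepsilon}$'' --- is false for general $\Psi$: for the reduced-fraction sets the correct overlap estimate (Duffin--Schaeffer, later sharpened by Pollington--Vaughan) carries a correction factor that is multiplicative over the primes dividing $mn/\gcd(m,n)^2$ and can be unbounded; taming it is precisely the content of the classical Duffin--Schaeffer theorem under the hypothesis
\begin{equation*}
\limsup_{N\rar\infty}\Bigl(\sum_{n=1}^N\frac{\varphi(n)\Psi(n)}{n}\Bigr)\Bigl(\sum_{n=1}^N\Psi(n)\Bigr)^{-1}>0,
\end{equation*}
and in its absence the quasi-independence can genuinely fail (this is why the unconditional statement was a conjecture for eighty years). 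So the ``standard but delicate part'' of your plan is not standard: it is a known theorem that you would be re-proving from scratch, and your proposed error analysis would not survive contact with it.

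The paper's route sidesteps all of this: it sets $\Psi=\psi/|\cdot|_\D$ and simply verifies the two hypotheses of the Duffin--Schaeffer theorem (Theorem \ref{thm:DS}). The first is the divergence of $\sum\Psi(n)$, equivalent to divergence of (\ref{eqn:thm2divcond}) by Lemma \ref{lem:asymp}. The second is condition (\ref{eqn:thmDShyp2}) above, and this is where the real work lies: one needs the uniform lower bound $\sum_{n\le N,\,d\nmid n}\varphi(n)/n\ge CN$ for all $d\ge2$ (inequality (\ref{eqn:philem1}), proved via the M\"obius-function evaluation of $\sum_{n\le N}\varphi(n)/n$), combined with $\varphi(n_km)\ge\varphi(n_k)\varphi(m)$, hypothesis (\ref{eqn:thm2.Dhyp}), and a partial summation in $\psi$. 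Your intuition for where (\ref{eqn:thm2.Dhyp}) enters --- preventing the $\varphi(n_k)/n_k$ factors from shrinking the weighted sum --- is exactly right, but you are missing the companion estimate (\ref{eqn:philem1}) over the cofactors $m$ with $d_{k+1}\nmid m$, and more importantly you should recognize that once $\sum_{n\le N}\varphi(n)\Psi(n)/n\gg\sum_{n\le N}\Psi(n)$ is established, the entire overlap argument can be outsourced to Duffin--Schaeffer rather than rebuilt.
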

We also note that when (\ref{eqn:thm2divcond}) converges the inequality (\ref{eqn:weak}) always has finitely many solutions, regardless of whether or not (\ref{eqn:thm2.Dhyp}) is satisfied.

When $|\cdot |_D=|\cdot |_p$ for some prime $p$ we have that $\M(N)\asymp \log N$, and Theorem \ref{thm:2} reduces in this case to the previously mentioned result from \cite{BHV}. To see what Theorem \ref{thm:2} means in terms of the infima type expressions that occur in the Mixed Littlewood Conjecture, if $\D$ satisfies (\ref{eqn:thm2.Dhyp}) then for almost every $\alpha\in\R$ we have that
\begin{equation*}
	\inf_{n \rightarrow \infty} n \M(n)(\log n)(\log\log n) \infabs{n}_\mathcal{D}\norm{n \alpha}  \: = \: 0,
\end{equation*}
while on the other hand for any $\epsilon>0$ and for almost every $\alpha\in\R,$
\begin{equation*}
	\inf_{n \rightarrow \infty} n \M(n)(\log n)(\log\log n)^{1+\epsilon} \infabs{n}_\mathcal{D}\norm{n \alpha}  \: > \: 0.
\end{equation*}
Furthermore the hypothesis on $\D$ in Theorem \ref{thm:2} is not that restrictive in practice. Although it is possible to choose $\D$ so that (\ref{eqn:thm2.Dhyp}) does not hold, any reasonably chosen pseudo-absolute value sequence should satisfy the condition. In particular if $\D$ has bounded ratios or even if the elements of $\D$ are divisible only by some finite collection of primes then it is easy to check that (\ref{eqn:thm2.Dhyp}) is satisfied. For the interested reader we will indicate in Section \ref{sec:conclusion} how one can construct a sequence $\D$ for which (\ref{eqn:thm2.Dhyp}) fails.

{\bf Acknowledgements:} We would like to thank Sanju Velani for encouraging us to look at these problems, which seem to have first been proposed in a systematic way in \cite[Section 1.3]{BV}. The second author would like to thank Barak Weiss  for helpful conversations regarding this project.


\section{Preliminaries for the proof of Theorem \ref{thm:psav}} \label{sec:Preliminaries1}

\subsection{Invariant measures for continuous transformations}

Suppose $X$ is a compact metric space and let $\B$ denote the $\sigma$-algebra of Borel subsets of $X$. Let $\mathfrak{M}=\MM(X)$ be the set of all probability measures on $(X,\B)$, and if $T:X\rar X$ is a continuous map let $\mathfrak{M}_T=\MM_T(X)$ be the subset of $T-$invariant measures in $\mathfrak{M}.$ In other words
\[\mathfrak{M}_T=\{\mu\in\MM~:~\mu(B)=\mu(T^{-1}B)\text{ for all }B\in\B\}.\]
The set $\MM$ has a natural topology coming from the Riesz Representation Theorem, and we refer to this as the weak$^*$ topology on $\MM$. The following basic lemma summarizes some of the important properties of this topology (see \cite[Theorems 6.4, 6.5, 6.10]{Wal} for proofs).
\begin{lemma}
If $X$ is a compact metric space then we have that:
\begin{enumerate}
\item[(i)] The space $\MM$ is compact and metrizable in the weak$^*$ topology,
\item[(ii)] The set $\MM_T$ is a non-empty, closed, convex subset of $\MM$, and
\item[(iii)] The extreme points of $\MM_T$ are exactly the measures $\mu\in\MM$ for which $T$ is an ergodic measure preserving transformation of $(X,\mu)$.
\end{enumerate}
\end{lemma}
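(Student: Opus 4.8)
The plan is to prove the three assertions in turn, each time reducing to a standard tool. For (i) I would appeal to the Banach--Alaoglu theorem. Because $X$ is compact metric, $C(X)$ is separable, so the closed unit ball of the dual $C(X)^*$ is weak$^*$ compact and, crucially, weak$^*$ metrizable; fixing a countable dense sequence $\{f_j\}$ in the unit ball of $C(X)$, an explicit compatible metric is $d(\mu,\nu)=\sum_{j\ge1}2^{-j}\abs{\int f_j\,d\mu-\int f_j\,d\nu}$. It then remains only to note that, by the Riesz Representation Theorem, $\MM$ is precisely the set of positive normalized functionals on $C(X)$, and that the defining conditions ($\int f\,d\mu\ge0$ whenever $f\ge0$, and $\int 1\,d\mu=1$) are preserved under weak$^*$ limits; hence $\MM$ is a weak$^*$-closed subset of the unit ball and so inherits compactness and metrizability.

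For (ii), convexity is immediate since for each $B\in\B$ the map $\mu\mapsto\mu(B)-\mu(T^{-1}B)$ is affine. For closedness I would first record that $T$-invariance of $\mu$ is equivalent to $\int f\circ T\,d\mu=\int f\,d\mu$ for every $f\in C(X)$ --- the nontrivial direction being a routine approximation of indicators of Borel sets by continuous functions --- and then write $\MM_T=\bigcap_{f\in C(X)}\{\mu:\int f\circ T\,d\mu=\int f\,d\mu\}$, an intersection of weak$^*$-closed sets because $f\circ T\in C(X)$ and integration against a fixed continuous function is weak$^*$-continuous. Non-emptiness I would obtain from the Krylov--Bogolyubov argument: for an arbitrary $x\in X$ set $\mu_N=\frac1N\sum_{k=0}^{N-1}\delta_{T^kx}$, use (i) to pass to a weak$^*$ limit point $\mu$, and observe that for each $f\in C(X)$ one has $\abs{\int f\circ T\,d\mu_N-\int f\,d\mu_N}=\frac1N\abs{f(T^Nx)-f(x)}\le\frac{2}{N}\infnorm{f}\rar0$, so $\mu\in\MM_T$.

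For (iii) I would prove the two implications directly. If $\mu\in\MM_T$ is \emph{not} ergodic, choose $B\in\B$ with $T^{-1}B=B$ and $0<\mu(B)<1$; then $\mu_1(\cdot)=\mu(\cdot\cap B)/\mu(B)$ and $\mu_2(\cdot)=\mu(\cdot\cap(X\setminus B))/\mu(X\setminus B)$ both lie in $\MM_T$ (using $T^{-1}B=B$), are distinct, and satisfy $\mu=\mu(B)\mu_1+(1-\mu(B))\mu_2$, so $\mu$ is not an extreme point. Conversely, suppose $\mu\in\MM_T$ is ergodic and $\mu=t\mu_1+(1-t)\mu_2$ with $t\in(0,1)$ and $\mu_i\in\MM_T$. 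Then $\mu_1\ll\mu$, so the Radon--Nikodym theorem furnishes $g\in L^1(\mu)$, $g\ge0$, with $\mu_1=g\,\mu$; one checks that the $T$-invariance of both $\mu$ and $\mu_1$ forces $g\circ T=g$ $\mu$-almost everywhere, whence ergodicity makes $g$ constant $\mu$-a.e., and since $\int g\,d\mu=\mu_1(X)=1$ we conclude $\mu_1=\mu$ and likewise $\mu_2=\mu$. Thus $\mu$ is extreme, completing the equivalence.

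The step I expect to be the main obstacle is the verification that the Radon--Nikodym derivative $g$ in the last argument is $T$-invariant: concretely, one must show $\int_{T^{-1}B}g\,d\mu=\int_Bg\,d\mu$ for all $B\in\B$, which follows by combining $\mu_1(T^{-1}B)=\mu_1(B)$ with $\mu(T^{-1}B)=\mu(B)$ and the change-of-variables identity for the pushforward $T_*\mu$, but requires careful bookkeeping of which measure each integral is taken against. The only other point needing attention is the reconciliation of the Borel and continuous formulations of invariance used in (ii), which is dispatched by the standard density argument; everything else is a direct application of Banach--Alaoglu, the Riesz Representation Theorem, and elementary estimates.
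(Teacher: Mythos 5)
Your proposal is correct and is essentially the standard argument: the paper offers no proof of its own but cites Walters (Theorems 6.4, 6.5, 6.10), and your use of Banach--Alaoglu with separability of $C(X)$ for (i), the Krylov--Bogolyubov averaging argument for (ii), and the invariant-set/Radon--Nikodym dichotomy for (iii) is precisely that reference's route. The one step you rightly flag --- that $g\circ T=g$ $\mu$-a.e.\ --- is cleanest via level sets $\{g<c\}$, comparing $\int_{E\setminus T^{-1}E}g\,d\mu$ with $\int_{T^{-1}E\setminus E}g\,d\mu$, but your outline is sound.
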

Let $\EE_T=\EE_T(X)$ be the subset of extreme points of $\MM_T(X)$. Since $\MM$ is metrizable and $\MM_T$ is compact and convex, by the Choquet Representation Theorem \cite[Chapter 3]{Phe} for any $\mu\in\MM_T$ there is a probability measure $\lambda$ supported on $\EE_T$ with the property that
\begin{equation}\label{eqn:ergdecomp}
\mu=\int_{\EE_T}m~d\lambda(m).
\end{equation}
This is the {\em ergodic decomposition of $\mu\in\MM_T$}.
\subsection{Entropy and dimension}
Suppose that $X$ is a compact metric space with metric $d$ and that $T:X\rar X$ is continuous. For $n\in\N$ and $\epsilon >0$ we say that a subset $A\subseteq X$ is {\em $(n,\epsilon)$-separated with respect to $T$} if for any $\alpha,\beta\in A, \alpha\not=\beta,$ we have that
\[\max_{0\le i\le n-1}d(T^i\alpha,T^i\beta)\ge\epsilon.\]
Let $s_n(T,\epsilon)$ be the largest cardinality of an $(n,\epsilon)-$separated subset of $X$ with respect to $T$. The \emph{topological entropy of $T$} is defined as
\[h_{\text{top}}(T)=\lim_{\epsilon\rar 0}~\limsup_{n\rar\infty}\frac{\log s_n(T,\epsilon)}{n}.\]

Next if $\mu\in\MM_T$ and $\PP\subseteq\B$ is a finite partition of $X$ we set
\[H_\mu(\PP)=-\sum_{P\in\PP}\mu(P)\log\mu (P),\]
and we let
\[h_\mu(T,\PP)=\lim_{n\rar\infty}\frac{1}{n}H_\mu\left(\bigvee_{i=0}^{n-1}T^{-i}\PP\right),\]
where
\[\bigvee_{i=0}^{n-1}T^{-i}\PP=\left\{\bigcap_{i=0}^{n-1}T^{-i}P_i:P_0,\ldots ,P_{n-1}\in\PP\right\}.\]
The \emph{metric entropy of $T$ with respect to $\mu$} is defined as
\[h_\mu(T)=\sup_{\PP}h_\mu(T,\PP),\]
where the supremum is taken over all finite partitions $\PP\subseteq\B$. When there is no confusion we may also refer to $h_\mu(T)$ as the entropy of $\mu$.

The map from $\MM_T$ to $[0,\infty)$ which sends $\mu$ to $h_\mu$ is affine \cite[Theorem 8.1]{Wal}. Also the topological and metric entropies associated to a continuous transformation are connected by the formula
\begin{equation}\label{eqn:varprinc}
h_{\text{top}}(T)=\sup\{h_\mu(T):\mu\in\MM_T\},
\end{equation}
which is known as the variational principle \cite[Theorem 8.6]{Wal}.

A concept which is somewhat related to topological entropy is the notion of the upper box dimension of a subset $A\subseteq X$. For $\epsilon>0$ we say that $B\subseteq A$ is $\epsilon-$separated if for any $\alpha,\beta\in B, \alpha\not=\beta,$ we have that $d(\alpha,\beta)\ge\epsilon$. Let $s(A, \epsilon)$ be the largest cardinality of an $\epsilon-$separated subset of $A$. The \emph{upper box dimension of $A$} is defined as
\[\udim (A)=\limsup_{\epsilon\rar 0}\frac{\log s(A,\epsilon)}{\log(1/\epsilon)}.\]
First we establish an elementary fact. Here and in what follows we are working in the metric space $(\R/\Z,\|\cdot\|)$.
\begin{lemma}\label{lem:epssepdiffset}
For any $A\subseteq\R/\Z$ and $\epsilon>0$ we have that
\[s(A-A,\epsilon)\le2s(A,\epsilon)^2,\]
where $A-A=\{\alpha-\beta:\alpha,\beta\in A\}$.
\end{lemma}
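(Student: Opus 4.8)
The plan is to show that an $\epsilon$-separated subset of $A-A$ cannot be much larger than the square of the size of an $\epsilon$-separated subset of $A$, by a pigeonhole argument on a fixed maximal $\epsilon/2$-separated set in $A$. First I would fix a maximal $\epsilon/2$-separated subset $\{a_1,\ldots,a_m\}$ of $A$, where $m=s(A,\epsilon/2)$; note that by an easy doubling argument $s(A,\epsilon/2)\le 2\,s(A,\epsilon)$, since any $\epsilon/2$-separated set in $\R/\Z$ meets each of a fixed collection of $\epsilon$-balls in at most two points (the circle being one-dimensional). Maximality means that every point of $A$ lies within $\epsilon/2$ of some $a_i$, so there is a map $\pi:A\rar\{1,\ldots,m\}$ with $\|\,x-a_{\pi(x)}\|<\epsilon/2$ for all $x\in A$.

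Next, given an $\epsilon$-separated subset $\{d_1,\ldots,d_s\}$ of $A-A$, write each $d_j=\alpha_j-\beta_j$ with $\alpha_j,\beta_j\in A$, and assign to $d_j$ the pair $(\pi(\alpha_j),\pi(\beta_j))\in\{1,\ldots,m\}^2$. The key step is to check that this assignment is injective: if $d_j$ and $d_{j'}$ received the same pair $(i,i')$, then
\[
\|d_j-d_{j'}\|\le\|\alpha_j-a_i\|+\|a_i-\alpha_{j'}\|+\|\beta_{j'}-a_{i'}\|+\|a_{i'}-\beta_j\|<\epsilon,
\]
using the triangle inequality in $(\R/\Z,\|\cdot\|)$, contradicting $\epsilon$-separation unless $j=j'$. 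Hence $s\le m^2\le 4\,s(A,\epsilon)^2$.

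This gives the bound with a constant of $4$ rather than $2$, so the final step is to tighten the argument to recover the stated factor $2$. The gain comes from being more careful with the doubling step: rather than passing through $s(A,\epsilon/2)$, one should compare $d_j$ directly to elements of a maximal $\epsilon$-separated (or slightly sub-$\epsilon$) net and exploit that on the circle the two ``nearby'' net points for $\alpha_j$ and for $\beta_j$ can be chosen with total displacement just under $\epsilon$, or alternatively track the $\pm$ sign coming from which side of $a_i$ the point $\alpha_j$ falls on — this halves the count in one of the two coordinates. The main obstacle is therefore purely bookkeeping: getting the constant exactly $2$ rather than $4$, which requires choosing the net radius and the triangle-inequality splitting in a matched way; the injectivity heart of the argument is robust and straightforward.
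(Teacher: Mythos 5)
Your overall strategy (fix a maximal separated net in $A$, assign to each difference the pair of nearest net points, and count pairs) is the same covering-plus-pigeonhole idea as the paper's proof, which takes a maximal $\epsilon$-separated set $\{\alpha_1,\dots,\alpha_k\}\subseteq A$, observes $A-A\subseteq\bigcup_{i,j}B(\alpha_i-\alpha_j,2\epsilon)$, and uses subadditivity of $s(\cdot,\epsilon)$ over this cover. However, your write-up has a genuine quantitative gap at the key injectivity step. With an $\epsilon/2$-net each of the four terms in your chain is only bounded by $\epsilon/2$, so the triangle inequality gives $\|d_j-d_{j'}\|<2\epsilon$, not $<\epsilon$ as you assert. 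A distance of up to $2\epsilon$ does not contradict $\epsilon$-separation, so the assignment $d_j\mapsto(\pi(\alpha_j),\pi(\beta_j))$ need not be injective and the bound $s\le m^2$ does not follow. (Separately, your justification of $s(A,\epsilon/2)\le 2s(A,\epsilon)$ is off: an $\epsilon/2$-separated set can meet an open $\epsilon$-ball in up to four points, not two; the inequality itself can be rescued by an ``every other point'' argument, but not by the reason you give.)

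There are two standard repairs. Either use a maximal $\epsilon/4$-separated net, so that the four terms sum to less than $\epsilon$ and injectivity genuinely holds; or keep a coarser net and observe that all differences assigned to a fixed pair $(i,i')$ lie in an interval of length less than $2\epsilon$ (or $4\epsilon$ for an $\epsilon$-net), which can contain only boundedly many points of any $\epsilon$-separated set. Either route yields $s(A-A,\epsilon)\le c\,s(A,\epsilon)^2$ with an absolute constant $c$ larger than $2$. Your closing paragraph about ``tracking signs'' to recover the constant $2$ is not a proof and should be dropped; but note that the precise constant is immaterial here, since the lemma is only invoked to bound the upper box dimension of $A(\alpha)$ from below, and multiplicative constants disappear in the limit defining $\udim$. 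So the right fix is to prove the lemma with an explicit absolute constant and not to chase the factor $2$.
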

\begin{proof}
Given $\epsilon>0$ let $\{\alpha_1,\ldots , \alpha_k\}$ be an $\epsilon-$separated subset of $A$ with maximum cardinality. Then we have that
\[A\subseteq \bigcup_{1\le i\le k}B(\alpha_i,\epsilon),\]
where $B(\alpha_i,\epsilon)$ denotes the open ball of radius $\epsilon$ centered at $\alpha_i$. This gives that
\[A-A\subseteq \bigcup_{1\le i,j\le k}(B(\alpha_i,\epsilon)-B(\alpha_j,\epsilon))=\bigcup_{1\le i,j\le k}B(\alpha_i-\alpha_j,2\epsilon),\]
and therefore
\[s(A-A,\epsilon)\le\sum_{1\le i,j\le k}s(B(\alpha_i-\alpha_j,2\epsilon),\epsilon)=2s(A,\epsilon)^2.\]
\end{proof}
In our proof of Theorem \ref{thm:psav} we will link upper box dimension and entropy using following lemma.
\begin{lemma}\label{lem:dimentropy}
Suppose that $a\in\N, a\ge 2$, and let $T_a:\R/\Z\rar\R/\Z$ be the map $T_a(x)=ax.$ If $A\subseteq \R/\Z$ is a closed set satisfying $T(A)\subseteq A$ then for any $\epsilon>0$ there exists a measure $\mu\in\MM_{T_a}(A)$ with
\[h_\mu (T_a)\ge \udim (A)\cdot\log a-\epsilon. \]
In particular if $\udim (A)>0$ then there is a measure $\mu\in\EE_{T_a}(A)$ with positive entropy.
\end{lemma}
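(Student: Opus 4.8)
The plan is to reduce the lemma to the variational principle (\ref{eqn:varprinc}) and the ergodic decomposition (\ref{eqn:ergdecomp}) via the single inequality $h_{\text{top}}(T_a|_A)\ge\udim(A)\cdot\log a$, and it is this inequality that carries the real content.

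To prove it I would exploit the fact that $1/(2a)$ is an expansive constant for $T_a$. If $t\in\R/\Z$ satisfies $\|a^it\|<1/(2a)$ for all $0\le i\le n-1$, then a straightforward induction on $j$ shows that $\|a^jt\|=a^j\|t\|$ for all $0\le j\le n-1$ (the hypotheses keep $a^j\|t\|$ below $1/2$, so no value wraps around), and hence $\|t\|=a^{-(n-1)}\|a^{n-1}t\|<1/(2a^n)$; conversely $\|a^it\|\le a^i\|t\|$ always. Thus a subset of $A$ is $(n,1/(2a))$-separated with respect to $T_a$ precisely when it is $1/(2a^n)$-separated in the metric $\|\cdot\|$, so that
\[s_n\bigl(T_a|_A,\tfrac{1}{2a}\bigr)=s\bigl(A,\tfrac{1}{2a^n}\bigr)\qquad(n\ge1).\]
Since $s(A,\cdot)$ is non-increasing and the scales $a^{-n}$ are geometrically spaced, one has $\limsup_{n\to\infty}\tfrac1n\log s(A,\tfrac{1}{2a^n})=\udim(A)\log a$ directly from the definition of $\udim$; and since $h_{\text{top}}(T_a|_A)\ge\limsup_{n\to\infty}\tfrac1n\log s_n(T_a|_A,\epsilon)$ for every fixed $\epsilon>0$, combining these yields $h_{\text{top}}(T_a|_A)\ge\udim(A)\log a$.

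The remainder is routine. The set $A$ is compact, being closed in $\R/\Z$, and $T_a|_A\colon A\to A$ is continuous, so by the variational principle (\ref{eqn:varprinc}) there is, for any prescribed $\epsilon>0$, a measure $\mu\in\MM_{T_a}(A)$ with $h_\mu(T_a)>h_{\text{top}}(T_a|_A)-\epsilon\ge\udim(A)\log a-\epsilon$; this is the first assertion. For the second, assuming $\udim(A)>0$ I would apply this with $\epsilon=\tfrac12\udim(A)\log a$ to obtain $\mu\in\MM_{T_a}(A)$ with $h_\mu(T_a)>0$, write its ergodic decomposition $\mu=\int_{\EE_{T_a}(A)}m\,d\lambda(m)$, and use that $h_\mu(T_a)=\int h_m(T_a)\,d\lambda(m)$ (the entropy map being affine and bounded). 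Then $h_m(T_a)>0$ for $m$ in a set of positive $\lambda$-measure, and any such $m$ is the required element of $\EE_{T_a}(A)$ with positive entropy.

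The only delicate point is the exact description of the $(n,1/(2a))$-separated sets: taking the threshold to be an expansive constant for $T_a$ is what forces these to coincide with ordinary $1/(2a^n)$-separated sets, and hence links topological entropy to box dimension with no loss. An alternative would be to code $(A,T_a|_A)$ by a one-sided subshift on $a$ symbols through base-$a$ expansions, but that route requires bookkeeping around the countably many points with two expansions together with a comparison between the subshift and $(A,T_a|_A)$, and seems less clean.
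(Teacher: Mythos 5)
Your proposal is correct and follows essentially the same route as the paper: both hinge on the observation that separation at scale comparable to $a^{-n}$ in $\|\cdot\|$ is equivalent to $(n,1/(2a))$-separation under $T_a$, yielding $h_{\text{top}}(T_a|_A)\ge\udim(A)\log a$, after which the variational principle and the ergodic decomposition (with affineness of entropy) finish the argument exactly as in the paper. Your version is marginally cleaner in that it records the exact identity $s_n(T_a|_A,\tfrac{1}{2a})=s(A,\tfrac{1}{2a^n})$ and uses geometric spacing of scales, where the paper only needs the one-sided implication along a sequence $\epsilon_m$ realizing the box dimension, but this is a cosmetic difference.
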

\begin{proof}
Let $d=\udim (A)$ and assume without loss of generality that $d>0$. Choose $\{\epsilon_m\}\subseteq\R$, decreasing to $0$, with
\[d=\lim_{m\rar\infty}\frac{\log s(A,\epsilon_m)}{\log (1/\epsilon_m)}.\]
Then for any $0<\delta<d$ there is an integer $m_0$ with
\[s(A,\epsilon_m)\ge (1/\epsilon_m)^{d-\delta}\text{ for all }m\ge m_0.\]
Now for the moment fix a $\delta$ and an $m\ge m_0$ and let $n$ be the integer which satisfies $a^{-n}\le\epsilon_m<a^{-n+1}$. Then if $\{\alpha_1,\ldots ,\alpha_k\}$ is an $\epsilon_m$-separated subset of $A$ of maximum cardinality we have that $k\ge a^{(n-1)(d-\delta)}$ and that
\[\|\alpha_i-\alpha_j\|\ge a^{-n}\text{ for all }1\le i<j\le k.\]
It is not difficult to check that the latter condition implies that for any $i\not= j$ we can find an integer $0\le \ell<n$ with \[\|a^\ell\alpha_i-a^\ell\alpha_j\|\ge 1/2a.\]
In other words the set $\{\alpha_1,\ldots ,\alpha_k\}$ is $(n,1/2a)-$separated with respect to $T_a$. This gives that
\[\frac{\log s_n(T_a|_A,1/2a)}{n}\ge(d-\delta)\log a-\frac{(d-\delta)\log a}{n}.\]
Now our choice for $n$ must tend to infinity with $m$ and this gives that
\[\limsup_{n\rar\infty}\frac{\log s_n(T_a|_A,1/2a)}{n}\ge (d-\delta)\log a.\]
Finally by letting $\delta$ tend to zero we obtain that
\[h_{\text{top}}(T_a|_A)\ge \udim (A)\cdot \log a.\]
The first claim of the lemma then follows from the variational principle (\ref{eqn:varprinc}). For the second claim let $\mu$ be any measure in $\MM_{T_a}(A)$ with positive entropy. Using the ergodic decomposition (\ref{eqn:ergdecomp}) and the fact that entropy is affine we have that
\[h_\mu(T_a)=\int_{\EE_{T_a}}h_m(T_a)d\lambda(m).\]
Since this integral is positive there must be a collection of ergodic measures, of positive measure with respect to $\lambda$, which have positive entropy. This finishes the proof of the lemma.
\end{proof}

\subsection{Diophantine approximation} For each $c>0$ we define $\bad (c)\subseteq \R$ to be the collection of real numbers $\alpha$ which satisfy
\[\inf_{n\in\N} n\|n\alpha\|> c.\]
We say that a real number $\alpha$ is {\em badly approximable} if $\alpha\in\cup_{c>0}\bad (c)$, and we say that $\alpha$ is {\em well approximable} otherwise. The sets $\bad (c)$ are invariant under integer translation and so we also think of them, as well as the sets of badly and well approximable numbers, as subsets of $\R/\Z$.

From the classical theory of continued fractions it has long been known that almost every $\alpha$, with respect to Lebesgue measure, is well approximable \cite{Ber,Bor}. Recently Einsiedler, Fishman, and Shapira, using a measure rigidity theorem due to Lindenstrauss \cite{Lin}, have shown that we may draw the same conclusion with Lebesgue measure replaced by any times-$a$ invariant measure with positive entropy.
\begin{theorem}{\cite[Theorem 1.4]{EFS}}\label{thm:EFS}
Suppose $a\in\N$ and let $T_a:\R/\Z\rar\R/\Z$ be the map $T_a(x)=ax$. If $\mu\in\EE_{T_a}$ has positive entropy then $\mu-$almost every $\alpha\in\R/\Z$ is well approximable.
\end{theorem}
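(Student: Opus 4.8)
We must show that if $\mu\in\EE_{T_a}$ has positive entropy then $\mu$-almost every $\alpha$ is well approximable; equivalently (we may assume $a\ge 2$, else there is nothing to prove) that $\mu(\bad)=0$, where $\bad=\bigcup_{c>0}\bad(c)$. The plan is to transport $\mu$ onto an $S$-arithmetic homogeneous space on which $T_a$ becomes conjugate to a diagonal translation, and then to apply the measure rigidity theorem of Lindenstrauss \cite{Lin}. First a reduction: if $\alpha$ is well approximable then so is $a\alpha$, since $n_k\norm{n_k\alpha}\to 0$ forces $n_k\norm{n_k(a\alpha)}\le a\,n_k\norm{n_k\alpha}\to 0$. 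Hence $W:=(\R/\Z)\setminus\bad$ satisfies $T_a(W)\subseteq W$, so $W\subseteq T_a^{-1}W$ with $\mu(T_a^{-1}W)=\mu(W)$, whence $W=T_a^{-1}W$ modulo $\mu$ and ergodicity gives $\mu(W)\in\{0,1\}$. It is therefore enough to derive a contradiction from $\mu(\bad)=1$.

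Put $\Q_a=\prod_{p\mid a}\Q_p$, $G=\mathrm{PGL}_2(\R)\times\mathrm{PGL}_2(\Q_a)$, $\Gamma=\mathrm{PGL}_2(\Z[1/a])$ (a diagonally embedded irreducible lattice), and $X=\Gamma\backslash G$. Passing to the natural extension of $(\R/\Z,T_a,\mu)$ yields the ergodic automorphism $\hat T_a$ (multiplication by $a$) of the $a$-solenoid $\hat X_a\cong(\R\times\Q_a)/\Z[1/a]$, carrying a measure $\hat\mu$ with $h_{\hat\mu}(\hat T_a)=h_\mu(T_a)>0$. Embed $\hat X_a$ into $X$ by $\iota\big((\xi,\eta)+\Z[1/a]\big)=\Gamma\big(u_\infty(\xi),u_a(\eta)\big)$, where $u_v(t)$ is the class of $\bigl(\begin{smallmatrix}1&t\\0&1\end{smallmatrix}\bigr)$ at the place $v$. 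Writing $H=U_\infty\times U_a$ for the unipotent radical of the upper-triangular parabolic, one has $\Gamma\cap H=\{\,\bigl(\begin{smallmatrix}1&r\\0&1\end{smallmatrix}\bigr):r\in\Z[1/a]\,\}$, so $\iota$ is a well-defined injection and, $\hat X_a$ being compact and $\Gamma H$ closed, a homeomorphism onto the compact set $Z:=\Gamma\backslash\Gamma H\subseteq X$. The class $\gamma$ of $\mathrm{diag}(a,1)$ lies in $\Gamma$ and satisfies $\gamma u_v(t)\gamma^{-1}=u_v(at)$ at every place, so $\iota\circ\hat T_a=R_\delta\circ\iota$, where $R_\delta$ is right translation by $\delta:=\gamma^{-1}$, whose archimedean component $\mathrm{diag}(1,a)$ generates the geodesic flow direction. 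Consequently $\tilde\mu:=\iota_*\hat\mu$ is an ergodic, $\delta$-invariant probability measure on $X$, supported on $Z$, with $h_{\tilde\mu}(\delta)=h_\mu(T_a)>0$.

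By the $S$-arithmetic form of Dani's correspondence, $\alpha$ is badly approximable precisely when the forward orbit of $\iota(\hat x)$ (for any $\hat x$ over $\alpha$) under the archimedean geodesic flow $\{(g_t,1)\}_{t\ge 0}$ remains in a fixed compact subset of $X$; since $(g_t,1)$ commutes with $\delta$, the set of $x\in X$ with bounded forward geodesic orbit is $\delta$-invariant, and by the assumption $\mu(\bad)=1$ it has full $\tilde\mu$-measure. In particular $\tilde\mu$ is recurrent, under the archimedean flow, into a compact set. Now Lindenstrauss's measure rigidity theorem \cite{Lin} applies: a $\delta$-invariant ergodic probability measure on $X$ with positive entropy for the archimedean flow and with the requisite recurrence (furnished by the previous sentence, together with the recurrence in the $\Q_a$-directions coming from $\delta$-invariance and compactness of $Z$) is homogeneous, hence is the $K$-invariant measure on a single closed orbit $Kx\subseteq X$. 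A short case analysis shows that the only such measure carried by the proper closed set $Z$ is the $H$-invariant measure $m_Z$ on $Z=Hx$. Thus $\tilde\mu=m_Z$; but $m_Z$ corresponds under $\iota$ to Haar measure on the compact group $\hat X_a$, which factors onto Lebesgue measure on $\R/\Z$, so $\mu$ would be Lebesgue. Since Lebesgue-almost every real number is well approximable, this contradicts $\mu(\bad)=1$ and completes the proof.

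The weight of the argument rests on the appeal to Lindenstrauss's theorem. Its positive-entropy hypothesis comes for free, since the natural extension and the embedding $\iota$ are isomorphisms of measure-preserving systems, so $h_{\tilde\mu}(\delta)=h_\mu(T_a)>0$ and this entropy is carried by the archimedean direction. The delicate hypothesis is the recurrence, and this is exactly where $\mu(\bad)=1$ is used: without boundedness of the archimedean orbits they may escape into the cusp --- as they do for Lebesgue measure, whose $\iota$-image is precisely the homogeneous measure $m_Z$ --- and then no contradiction would be available. A secondary but genuine task is to identify the correct $S$-arithmetic model, to verify the boundedness criterion uniformly over the primes dividing $a$, and to carry out the case analysis showing that $m_Z$ is the only admissible homogeneous measure supported on $Z$.
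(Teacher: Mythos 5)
First, a point of order: the paper contains no proof of this statement --- it is quoted verbatim from Einsiedler--Fishman--Shapira \cite[Theorem 1.4]{EFS} and used as a black box in Section \ref{sec:ProofOfTheorem1}. So there is no internal argument to compare against; what you have written is an attempted reconstruction of the EFS proof. To your credit, the skeleton is the right one: the reduction via ergodicity to ruling out $\mu(\bad)=1$, the passage to the natural extension on the $a$-solenoid, its realisation as the compact orbit $Z=\Gamma\backslash\Gamma H$ inside $X=\mathrm{PGL}_2(\Z[1/a])\backslash\bigl(\mathrm{PGL}_2(\R)\times\mathrm{PGL}_2(\Q_a)\bigr)$, the conjugation of $T_a$ to right translation by the diagonal class $\delta$, and the Dani-type dictionary between bad approximability and bounded forward geodesic orbits are all correct and are all present in the genuine argument.

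The application of Lindenstrauss's theorem, however, has two genuine gaps. (a) The theorem requires a measure invariant under the full one-parameter diagonal subgroup $A=\{g_t\}$ of $\mathrm{PGL}_2(\R)$, with positive entropy of a.e.\ $A$-ergodic component and recurrence under the $\Q_a$-factor (not, as you write, ``recurrence under the archimedean flow into a compact set''). Your $\tilde\mu$ is invariant only under the cyclic group generated by $\delta$, whose real component is a single matrix; it is not $A$-invariant, so the theorem does not apply to it. Producing an $A$-invariant measure is precisely where the hypothesis $\mu(\bad)=1$ is consumed in EFS: one averages $(g_t)_*\tilde\mu$ over $t\in[0,T]$, uses the boundedness of almost every forward geodesic orbit to rule out escape of mass in the weak-$*$ limit, and uses upper semicontinuity of entropy to retain positive entropy in the real direction. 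This step is the technical heart of the proof and is entirely absent from your sketch. (b) The conclusion of Lindenstrauss's theorem is that the measure is a combination of algebraic measures invariant under $\mathrm{PGL}_2(\R)$ --- not merely ``homogeneous'' for some unspecified $K$ --- and no $\mathrm{PGL}_2(\R)$-invariant probability measure can be carried by the compact $H$-orbit $Z$, since $\mathrm{PGL}_2(\R)$ moves $Z$ off itself. Hence your ``short case analysis'' identifying the limit with $m_Z$ cannot be carried out; worse, if the theorem really applied to $\tilde\mu$ as you claim, the same reasoning would show that \emph{no} positive-entropy $T_a$-invariant measure exists, contradicted by Lebesgue measure itself. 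The actual contradiction runs through the averaged measure: it is supported on points with bounded forward geodesic orbits, while $\mathrm{PGL}_2(\R)$-invariance forces (via ergodicity of the geodesic flow on each algebraic component of this non-uniform quotient) almost every geodesic orbit to be unbounded. I would also flag as nontrivial, rather than ``for free,'' the assertion that the entropy $h_{\tilde\mu}(\delta)$ is ``carried by the archimedean direction'': $\delta$ expands both the real and the $\Q_a$ unipotent directions, and separating the real-place entropy contribution requires an argument about leafwise conditional measures.
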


Finally we say that $a_1,\ldots ,a_s\in\N$ are \emph{multiplicatively independent} if the real numbers $\log a_1,\ldots ,\log a_s$ are linearly independent over $\Q$. We will use the following deep theorem of Baker and W\"{u}stholz on lower bounds for linear forms in logarithms.
\begin{theorem}\cite{BW}\label{thm:BW}
Suppose that $a_1,\ldots , a_s\in\N$ are multiplicatively independent. Then there exists a constant $\kappa>0$, which depends only on $a_1,\ldots ,a_s$, such that for any $b_1,\ldots ,b_s\in\Z$, not all $0$, we have that
\begin{equation*}
\left|\sum_{r=1}^sb_r\log a_r\right|\ge \left(3\cdot\max_{1\le r\le s}|b_r|\right)^{-\kappa}.
\end{equation*}
\end{theorem}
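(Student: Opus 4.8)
This is the Baker--W\"{u}stholz theorem on lower bounds for linear forms in logarithms, which we quote directly from \cite{BW}; the paper uses it as a black box and no independent proof is given here, but for orientation the structure of a proof --- Baker's method in quantitative form --- runs as follows. Fix $\kappa$, to be specified at the end in terms of $a_1,\ldots,a_s$ alone, write $\Lambda=\sum_{r=1}^{s}b_r\log a_r$ and $B=\max_{1\le r\le s}|b_r|$, and suppose for contradiction that $0<|\Lambda|<(3B)^{-\kappa}$. After relabelling we may assume $b_s\neq0$; the smallness of $|\Lambda|$ then says that on any bounded region of $\C$ the function $z\mapsto a_s^{z}$ is extremely well approximated by $z\mapsto\prod_{r<s}a_r^{-(b_r/b_s)z}$.

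First I would construct an auxiliary function. Choosing integer parameters $L$ and $M$ of size polynomial in $B$ and in quantities depending on the $a_r$, I would use Siegel's lemma --- the pigeonhole bound for integer solutions of an underdetermined homogeneous linear system --- to produce rational integers $p(\lambda)$, indexed by $\lambda=(\lambda_1,\ldots,\lambda_s)\in\{0,1,\ldots,L\}^s$, not all zero and of controlled size, such that the entire function
\[
f(z)=\sum_{\lambda}p(\lambda)\bigl(a_1^{\lambda_1}\cdots a_s^{\lambda_s}\bigr)^{z}
\]
vanishes to order at least $M$ at each of the integers $z=0,1,\ldots,M-1$. Because $|\Lambda|$ is tiny, $f(z)$ is, up to a negligible error on bounded discs, a linear combination of the exponentials $z\mapsto\exp\bigl(z\,w(\lambda)\bigr)$ with $w(\lambda)=\sum_{r<s}\bigl(\lambda_r-(b_r/b_s)\lambda_s\bigr)\log a_r$, and the system of vanishing conditions can be solved in nonzero integers once $(L+1)^s$ exceeds the number $M^2$ of conditions.

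Next comes the \emph{extrapolation} step. Exploiting the smallness of $|\Lambda|$ once more, together with the maximum-modulus principle applied on discs of growing radius, one shows that $f$ in fact vanishes to high order on a much longer run of consecutive integers than it was forced to; iterating this a bounded number of times produces so many vanishing conditions that a Vandermonde / interpolation argument --- using that the numbers $w(\lambda)$ are, over the relevant range of $\lambda$, pairwise distinct, which is where the $\Q$-linear independence of $\log a_1,\ldots,\log a_s$ enters --- forces every coefficient $p(\lambda)$ to vanish, contradicting the construction. The inequalities relating $L$, $M$, $B$, and the heights and degrees of the $a_r$ that one needs in order to close this loop are precisely what pin down an admissible $\kappa=\kappa(a_1,\ldots,a_s)$, and the sharp accounting of \cite{BW} yields the clean exponent in the statement.

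The hard part is the extrapolation: constructing $f$ by Siegel's lemma and extracting the final Vandermonde contradiction are, once the parameters have been chosen, essentially bookkeeping, whereas the extrapolation demands delicate complex-analytic estimates and a carefully balanced induction, and it is the trade-off there at each iteration --- between the multiplicity of vanishing one insists on and the length of the integer range on which it is achieved --- that dictates how large $\kappa$ must ultimately be.
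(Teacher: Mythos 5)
The paper treats this exactly as you do: it is quoted as the Baker--W\"{u}stholz theorem from \cite{BW} and no proof is given, so your decision to use it as a black box matches the paper's approach. Your orientation sketch of Baker's method (auxiliary function via Siegel's lemma, extrapolation via the maximum-modulus principle, and a final zero estimate forcing the coefficients to vanish) is a fair outline of how the cited result is actually established, but none of it is required for the paper's purposes.
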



\section{Proof of Theorem \ref{thm:psav}}\label{sec:ProofOfTheorem1}
Let $\Sigma_a=\{a^\ell\}_{\ell\ge 0}$ and let $\Sigma_a\D=\{a^\ell n_k\}_{\ell,k\ge 0}$. For $\alpha\in\R$ let $A(\alpha)\subseteq\R/\Z$ denote the closure of the set $(\Sigma_a\D)\alpha=\{a^\ell n_k\alpha\}_{\ell,k\ge 0}\subseteq\R/\Z$. If $\alpha\in\Q$ then (\ref{eqn:psavthmconc}) is trivially satisfied, so for the remainder of the proof we will assume that $\alpha\not\in\Q$.

Now suppose that for some $\alpha\in\R$ there were a constant $c>0$ such that
\[\inf_{n\in\N}n|n|_a|n|_\D\|n\alpha\|>c.\]
Then for any $\ell,k\ge 0$ we would have that
\[\inf_{n\in\N}n\left\|n(a^\ell n_k\alpha)\right\|\ge\inf_{n\in\N}\left(a^\ell n_kn\right)\left|a^\ell n_kn\right|_a\left|a^\ell n_kn\right|_\D\left\|a^\ell n_kn\alpha\right\|>c.\]
In other words we would have that $(\Sigma_a\D)\alpha\subseteq\bad (c)$, which would then imply that the set $A(\alpha)$ does not contain any well approximable points. Therefore in order to prove Theorem \ref{thm:psav} we just have to show that for any $\alpha\in\R\setminus\Q$ the set $A(\alpha)$ contains a well approximable point.

First we will show that we can always find long sequences of integers in $\Sigma_a\D$ with ratios close to $1$ (see equation (\ref{eqn:t_j/t_i}) below). Let $\{p_1<\cdots <p_s\}$ be the collection of prime numbers which divide the elements of $\D$, and for each $k\ge 0$ write
\[n_k=p_1^{b^{(1)}_k}\cdots p_s^{b^{(s)}_k}.\]
Hypothesis (\ref{eqn:psavthmhyp}) guarantees that for any $k\ge 2$,
\begin{equation}\label{eqn:expbnd1}
\max_{1\le r\le s}b^{(r)}_k\le 2k^\delta.
\end{equation}
Now for each $\ell\in\N$ let $\sigma_\ell\in\Z$ and $\tau_\ell\in [0,1)$ be selected so that $\sigma_\ell\ge 0$ and
\[\sum_{r=1}^s b^{(r)}_\ell\frac{\log p_r}{\log a}=\sigma_\ell+\tau_\ell.\]
Note that this is the same as writing $n_\ell$ in the form $a^{\sigma_\ell+\tau_\ell}$, and doing this makes it technically easier to compare the ratios of these numbers. Let $M$ be the smallest integer greater than $2\log a$. Then given $k\ge 2$, one of the intervals $[m/M,(m+1)/M), 0\le m<M,$ contains at least $k/M$ of the numbers $\{\tau_\ell\}_{1\le\ell\le k}$. Label the numbers which fall in this interval as $\tau_{\ell_1}<\cdots <\tau_{\ell_{k'}}$.

Next set $\sigma'=\max_{1\le i\le k'} \sigma_{\ell_i}$ and for each $1\le i\le k'$ let
\[t_i=a^{\sigma'-\sigma_{\ell_i}}n_{\ell_i}\in\Sigma_a\D.\]
Then for any $1\le i<j\le k'$ we have that
\begin{align}
\log\left(\frac{t_j}{t_i}\right)&=\sum_{r=1}^s\left(b^{(r)}_{\ell_j}-b^{(r)}_{\ell_i}\right)\log p_r+\left(\sigma_{\ell_i}-\sigma_{\ell_j}\right)\log a\label{eqn:logt_i/t_j1}\\
&=\log a\left(\tau_{\ell_j}-\tau_{\ell_i}\right),\nonumber
\end{align}
and this shows that
\begin{equation}\label{eqn:logt_i/t_j2}
0<\log\left(\frac{t_j}{t_i}\right)<\frac{\log a}{M}.
\end{equation}
Next using (\ref{eqn:expbnd1}) we have that
\[|\sigma_{\ell_j}-\sigma_{\ell_i}|\le \frac{s\log p_s}{\log a}\cdot\max_{1\le r\le s}\left(1+\left|b^{(r)}_{\ell_j}-b^{(r)}_{\ell_i}\right|\right)\le \left(\frac{4s\log p_s}{\log a}\right)k^\delta,\]
and so by applying Theorem \ref{thm:BW} to (\ref{eqn:logt_i/t_j1}) we deduce that there are constants $C,\kappa>0$, which depend only on $p_1,\ldots ,p_s$, and $a$, such that
\begin{equation}\label{eqn:logt_i/t_j3}
\log\left(\frac{t_j}{t_i}\right)\ge\frac{C}{k^{\delta\kappa}}.
\end{equation}
To avoid technicalities from here on we will assume that $k\ge \max\{2M,2C^{1/(\delta\kappa)}\}$. Combining (\ref{eqn:logt_i/t_j2}) and (\ref{eqn:logt_i/t_j3}) with the inequalities
\[1+x\le e^x\le 1+2x,~0\le x\le 1,\]
we have that
\begin{equation}\label{eqn:t_j/t_i}
1+\frac{C}{k^{\delta\kappa}}~\le~\frac{t_j}{t_i}<~ 1+\frac{2\log a}{M},~\text{ for all }~1\le i<j\le k'.
\end{equation}

Next we claim that we can always find a number $\gamma\in [0,1)$ satisfying
\[\gamma\in \left[\frac{1}{t_1a^2},\frac{1}{t_1a}\right)\cap\left(\Sigma_a-\Sigma_a\right)\alpha.\]
To see why this is true notice that since $\alpha\not\in\Q$ the point $0$ is an accumulation point of $(\Sigma_a-\Sigma_a)\alpha=\Sigma_a\alpha-\Sigma_a\alpha$. Also this set is symmetric about $0$, so it contains infinitely many points which lie in the interval $(0,1/t_1a^2)$. If $\beta$ is one of these points then we can find an integer $b\in\N$ with $a^b\beta\in [1/t_1a^2,1/t_1a)$, and our claim is verified by taking $\gamma=a^b\beta$.

With $\gamma$ as above, for any $n\in\N$ and for any $1\le i\le k'$ we have from (\ref{eqn:t_j/t_i}) that
\[\frac{1}{a^2}\le t_i\gamma \le t_1\gamma\left(1+\frac{2\log a}{M}\right)<\frac{2}{a}\le 1.\]
Furthermore if $i<k'$ then from the lower bound in (\ref{eqn:t_j/t_i}) we obtain
\[t_{i+1}\gamma-t_i\gamma\ge \frac{t_i\gamma C}{k^{\delta\kappa}}\ge\frac{C}{a^2k^{\delta\kappa}}.\]
Thus for each $n\in\N$ we have that
\[s\left(A(\alpha)-A(\alpha),\frac{C}{a^2k^{\delta\kappa}}\right)\ge k' \ge \frac{k}{M}.\]
Then by Lemma \ref{lem:epssepdiffset} we have
\begin{equation}\label{eqn:epssepa^nsigma}
s\left(A(\alpha),\frac{C}{a^2k^{\delta\kappa}}\right)\ge \left(\frac{k}{2M}\right)^{1/2},
\end{equation}
and this gives that
\[\udim (A(\alpha))\ge\limsup_{k\rar\infty}\frac{\log \left(\left(\frac{k}{2M}\right)^{1/2}\right) }{\log\left(\frac{2a^2k^{\delta\kappa}}{C}\right)}=\frac{1}{2\delta\kappa}>0.\]
Finally Lemma \ref{lem:dimentropy} ensures that there is an ergodic times-$a$ invariant measure $\mu$, supported on $A(\alpha),$ which has positive entropy. By Theorem \ref{thm:EFS} we have that $\mu-$almost every point is well approximable, but since $\mu(\R/\Z\setminus A(\alpha))=0$ this implies that $A(\alpha)$ contains a well-approximable point. This finishes the proof of the theorem.


\section{Preliminaries for the proof of Theorem \ref{thm:2}}
\label{sec:Preliminaries2}

Let $\Psi:\N\rar\R$ be any non-negative function and for each $n\in\N$ define $A_n=A_n(\Psi)\subseteq\R/\Z$ by
\[A_n(\Psi)=\{\alpha\in\R/\Z : \|n\alpha\|\le\Psi(n)\}.\]
Then define $A(\Psi)\subseteq\R/\Z$ by
\[A(\Psi)=\limsup_{n\rar\infty}A_n(\Psi)=\{\alpha\in\R/\Z:\alpha\in A_n\text{ for infinitely many }n\}.\]
In our problem we are interested in the case when $\Psi(n)=|n|_\D^{-1}\psi(n)$, for a pseudo-absolute value $|\cdot|_\D$ and a non-negative monotonic function $\psi:\N\rar\R$. If $\lambda$ denotes Lebesgue measure on $\R/\Z$ then we would like to show for this choice of $\Psi$ that $\lambda (A(\Psi))=1$ depending on whether or not the sum (\ref{eqn:thm2divcond}) diverges. First of all we demonstrate that the divergence or convergence of the sum in question is equivalent to the divergence or convergence of the measures of the corresponding sets $A_n$.
Here and in what follows we write $d_k=n_k/n_{k-1}$ for each $n_k\in\D,k\ge 1$.
\begin{lemma}
	\label{lem:asymp}
If $\D$ is any pseudo-absolute value sequence then for $N\in\N$ we have that
	\begin{equation*}
		\sum_{n=1}^N \frac{1}{\infabs{n}_\mathcal{D}} \quad \stackrel{(i)}{\asymp} \quad N  \mathcal{M}(N) \quad \stackrel{(ii)}{\asymp} \quad \sum_{n=1}^N  \mathcal{M}(n).
	\end{equation*}
Consequently if $\psi:\mathbb{N} \rightarrow \mathbb{R}$ is any non-negative decreasing function then
	\begin{equation}\label{eqn:measuredivcond}
		\sum_{n=1}^{\infty} \lambda \left(A_n\left(\frac{\psi}{|\cdot|_\D}\right)\right) \, = \, \infty \quad \Longleftrightarrow \quad \sum_{n=1}^{\infty} \mathcal{M}(n) \psi(n) \, = \, \infty.
	\end{equation}
\end{lemma}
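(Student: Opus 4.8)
The plan is to reduce everything to a clean combinatorial formula for $\sum_{n\le N}1/\infabs{n}_\D$, read off the two displayed $\asymp$'s from it, and then pass to the series statement (\ref{eqn:measuredivcond}) by partial summation.

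First I would note that $1/\infabs{n}_\D$ is exactly the largest element of $\D$ dividing $n$: if $\kappa(n)=\max\{k:n_k\mid n\}$ then $1/\infabs{n}_\D=n_{\kappa(n)}$, and since $n_0\mid n_1\mid n_2\mid\cdots$ the index set $\{k:n_k\mid n\}$ is precisely $\{0,1,\dots,\kappa(n)\}$. The $n\le N$ with $\kappa(n)=k$ are the multiples of $n_k$ that are not multiples of $n_{k+1}$, of which there are $\lfloor N/n_k\rfloor-\lfloor N/n_{k+1}\rfloor$ (the last floor being $0$ when $k=\M(N)$), so
\begin{equation*}
\sum_{n=1}^{N}\frac{1}{\infabs{n}_\D}=\sum_{k=0}^{\M(N)}n_k\left(\left\lfloor\frac{N}{n_k}\right\rfloor-\left\lfloor\frac{N}{n_{k+1}}\right\rfloor\right).
\end{equation*}
Abel summation (or a direct telescoping, using $n_0=1$) rewrites this as
\begin{equation}\label{eqn:sketchkey}
\sum_{n=1}^{N}\frac{1}{\infabs{n}_\D}=N+\sum_{k=1}^{\M(N)}(n_k-n_{k-1})\left\lfloor\frac{N}{n_k}\right\rfloor .
\end{equation}

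From (\ref{eqn:sketchkey}) together with the doubling property $d_k=n_k/n_{k-1}\ge 2$ both bounds in (i) fall out. For the upper bound, each summand obeys $(n_k-n_{k-1})\lfloor N/n_k\rfloor\le (n_k-n_{k-1})N/n_k\le N$, so the right-hand side is at most $(\M(N)+1)N\ll N\M(N)$. For the lower bound, when $k\le\M(N)-1$ one has $n_k\le 2^{-(\M(N)-k)}n_{\M(N)}\le N/2$, hence $\lfloor N/n_k\rfloor\ge N/(2n_k)$ and $(n_k-n_{k-1})\lfloor N/n_k\rfloor\ge\tfrac12N(1-n_{k-1}/n_k)\ge N/4$; summing these shows (\ref{eqn:sketchkey}) is at least $N+\tfrac14(\M(N)-1)N\gg N\M(N)$. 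For (ii) the upper estimate $\sum_{n\le N}\M(n)\le N\M(N)$ is immediate from monotonicity of $\M$. For the lower estimate I would write $\M(n)=\#\{k\ge1:n_k\le n\}$ and interchange the order of summation to get $\sum_{n=1}^{N}\M(n)=\sum_{k=1}^{\M(N)}(N-n_k+1)$; then $n_k\le 2^{-(\M(N)-k)}n_{\M(N)}\le 2^{-(\M(N)-k)}N$, so $N-n_k\ge N(1-2^{-(\M(N)-k)})$, and summing the resulting geometric series yields $\sum_{n\le N}\M(n)\ge(\M(N)-2)N$, which is $\gg N\M(N)$ once $\M(N)$ is bounded away from small values. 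Since $\D$ is infinite, $\M(N)\to\infty$, so this holds for all sufficiently large $N$; the finitely many remaining $N$ are irrelevant to what follows (and could be checked by hand if one insists on a statement uniform in $N$, noting that (i) in fact holds for every $N$ with $\M(N)\ge1$).

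Finally, for (\ref{eqn:measuredivcond}): one has $\lambda(A_n(\Psi))=\min(1,2\Psi(n))$ for every non-negative $\Psi$, so $\sum_n\lambda(A_n(\psi/\infabs{\cdot}_\D))$ diverges if and only if $\sum_n\psi(n)/\infabs{n}_\D$ does --- if the terms of the latter do not tend to $0$ then both series diverge trivially, and otherwise $\min(1,2\psi(n)/\infabs{n}_\D)=2\psi(n)/\infabs{n}_\D$ for all large $n$. Then, since $\psi$ is non-negative and decreasing and, by (i) and (ii), the partial sums $\sum_{n\le N}1/\infabs{n}_\D$ and $\sum_{n\le N}\M(n)$ are both $\asymp N\M(N)$ for large $N$, partial summation shows that $\sum_n\psi(n)/\infabs{n}_\D$ and $\sum_n\M(n)\psi(n)$ converge or diverge together (the bounded initial segments being harmless); combining the two equivalences gives (\ref{eqn:measuredivcond}). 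The one place to be careful is the bookkeeping around (\ref{eqn:sketchkey}): the point that makes the lower bounds work is that the doubling $d_k\ge2$ forces $\sum_{k\le\M(N)}n_k=O(n_{\M(N)})=O(N)$, so the ``short'' initial part of $\D$ contributes only a bounded multiple of $N$ and cannot swamp the $N\M(N)$ main term; the rest is routine estimation.
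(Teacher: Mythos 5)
Your proposal is correct and follows essentially the same route as the paper: decompose $\sum_{n\le N}1/\infabs{n}_\D$ according to the largest $n_k$ dividing $n$, use the doubling $n_k/n_{k-1}\ge 2$ to control $\sum_{k\le\M(N)}n_k\le 2N$, compute $\sum_{n\le N}\M(n)$ by interchanging the order of summation, and transfer to the series statement via Abel summation against the monotone $\psi$ (with the same separate treatment of the case where $\psi(n)/\infabs{n}_\D$ stays large). The only differences are cosmetic bookkeeping (exact floor-function identities versus the paper's $O(1)$ error terms), and your caveat about small $\M(N)$ applies equally to the paper's own argument.
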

\begin{proof}For the proof of $(i)$ we have that
\begin{align}
\sum_{n=1}^N\frac{1}{|n|_\D}&=\sum_{k=0}^{\M(N)}n_k\sum_{\substack{n=1\\n_k|n,~n_{k+1}\nmid n}}^N1\nonumber\\
&=\sum_{k=0}^{\M(N)}n_k\sum_{\substack{n\le N/n_k\\d_{k+1}\nmid n}}1\nonumber\\
&=\sum_{k=0}^{\M(N)}n_k\left(\left(1-\frac{1}{d_{k+1}}\right)\frac{N}{n_k}+O(1)\right)\nonumber\\
&=N\sum_{k=0}^{\M(N)}\left(1-\frac{1}{d_{k+1}}\right)+O\left(\sum_{k=0}^{\M(N)}n_k\right).\label{eqn:divlem1}
\end{align}
Now notice that $1/2\le (1-1/d_{k+1})< 1$ for all $k$ and that
\begin{equation}\label{eqn:asymplem1}
\sum_{k=0}^{\M(N)}n_k\le \sum_{k=0}^{\M(N)}\frac{n_{\M(N)}}{2^{\M(N)-k}}\le 2n_{\M(N)}\le 2N.
\end{equation}
As claimed this shows that (\ref{eqn:divlem1}) is bounded above and below by universal constants times $N\M(N)$.

For	$(ii)$ we have that
\begin{align*}
\sum_{n=1}^{N}\M(n)&=\sum_{k=0}^{\M(N)-1}k(n_{k+1}-n_k)+\M(N)(N-n_{\M(N)}+1)\\
&=(N+1)\M(N)-\sum_{k=0}^{\M(N)}n_k
\end{align*}
The latter quantity is clearly less than $2N\M(N)$, and by (\ref{eqn:asymplem1}) it is also greater than a constant times $N\M(N)$.

Finally for the proof of (\ref{eqn:measuredivcond}), first of all suppose that $\psi(m_i)/|m_i|_\D\ge 1/2$ for some infinite increasing sequence of integers $\{m_i\}_{i\in\N}$. Then for each $i$ we have that $A_{m_i}=\R/\Z$ so that the left hand side of (\ref{eqn:measuredivcond}) surely diverges. On the other hand using (ii) we have that
\[\sum_{n=1}^{m_i}\M(n)\psi(n)\ge \psi(m_i)\sum_{n=1}^{m_i}\M(n)\gg |m_i|_\D \left( m_i\M(m_i)\right)\ge \M(m_i),\]
and this tends to infinity with $i$.

Now for the other case assume that there is an $n_0\in\N$ such that $\psi(n)/|n|_\D<1/2$ for all $n\ge n_0$. In this case we have that
\begin{align}\label{eqn:measequiv1}
\lambda \left(A_n\left(\frac{\psi}{|\cdot|_\D}\right)\right)&=\frac{2\psi(n)}{|n|_\D} ~\text{ for all }n\ge n_0.
\end{align}
Now by the monotonicity of $\psi$ together with (i) and (ii) we obtain
\begin{align*}
\sum_{n=n_0}^N\frac{\psi(n)}{|n|_\D}&=\sum_{n=n_0}^N(\psi(n)-\psi(n+1))\sum_{m=n_0}^n\frac{1}{|m|_\D}+\psi (N+1)\sum_{m=n_0}^N\frac{1}{|m|_\D}\\
&\asymp \sum_{n=n_0}^N(\psi(n)-\psi(n+1))\sum_{m=n_0}^n\M(m)+\psi (N+1)\sum_{m=n_0}^N\M(m)\\
&=\sum_{n=n_0}^N\M(n)\psi (n),
\end{align*}
and this together with (\ref{eqn:measequiv1}) finishes the proof of the lemma.
\end{proof}
For any $\Psi$ as above if
\[\sum_{n\in\N}\lambda(A_n(\Psi))<\infty\]
then by the Borel-Cantelli Lemma we have that $\lambda (A(\Psi))=0$. One half of Theorem \ref{thm:2} follows from this observation together with (\ref{eqn:measuredivcond}). Unfortunately the converse of the Borel-Cantelli Lemma is not true in general for the sets $A_n(\Psi)$. In other words there are examples of functions $\Psi$ for which
\[\sum_{n\in\N}\lambda(A_n(\Psi))=\infty\]
and yet $\lambda (A(\Psi))=0$. Duffin and Schaeffer observed this in \cite{DS} and they also showed in the same paper that under certain conditions this type of anomalous behavior can be avoided.
\begin{theorem}{\cite{DS}}\label{thm:DS}
If $\Psi:\N\rar\R$ is a nonnegative function which satisfies
\begin{equation}\label{eqn:thmDShyp1}
\sum_{n\in\N}\Psi(n)=\infty
\end{equation}
and
\begin{equation}\label{eqn:thmDShyp2}
\limsup_{N\rar\infty}\left(\sum_{n=1}^N\frac{\varphi(n)\Psi (n)}{n}\right)\left(\sum_{n=1}^N\Psi(n)\right)^{-1}>0
\end{equation}
then $\lambda (A(\Psi))=1$.
\end{theorem}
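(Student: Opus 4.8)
This is the classical theorem of Duffin and Schaeffer \cite{DS}; the plan is to run their now-standard argument, whose four ingredients are: pass from $A_n(\Psi)$ to the corresponding set of \emph{reduced} fractions, prove a quasi-independence-on-average estimate for these sets, feed it into a divergence form of the Borel--Cantelli lemma to obtain positive measure, and finish with a zero--one law. First I would dispose of the trivial case in which $\Psi(n)\ge 1/2$ for infinitely many $n$ (then $A_n(\Psi)=\R/\Z$ for each such $n$, so $A(\Psi)=\R/\Z$), and hence assume $\Psi(n)<1/2$ for all large $n$. For $n\in\N$ set
\[
E_n=\bigcup_{\substack{0\le p<n\\ \gcd(p,n)=1}}\left(\frac pn-\frac{\Psi(n)}{n},\ \frac pn+\frac{\Psi(n)}{n}\right)\subseteq\R/\Z .
\]
Since $E_n\subseteq A_n(\Psi)$ we have $\limsup_n E_n\subseteq A(\Psi)$, so it suffices to prove $\lambda(\limsup_n E_n)=1$. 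For large $n$ the intervals defining $E_n$ are pairwise disjoint, so $\lambda(E_n)=2\varphi(n)\Psi(n)/n\le 2\Psi(n)$; with this, hypotheses (\ref{eqn:thmDShyp1}) and (\ref{eqn:thmDShyp2}) say exactly that $\sum_n\lambda(E_n)=\infty$ and that $\sum_{n\le N}\lambda(E_n)\gg\sum_{n\le N}\Psi(n)$ along some sequence $N\rar\infty$.

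The technical core is the estimate
\[
\sum_{m,n\le N}\lambda(E_m\cap E_n)\ \ll\ \left(\sum_{n\le N}\lambda(E_n)\right)^{2}+\sum_{n\le N}\Psi(n),
\]
which I would prove by a geometric count: an interval of $E_m$ centred at $p/m$ meets an interval of $E_n$ centred at $q/n$ only when $0<\abs{pn-qm}<n\Psi(m)+m\Psi(n)$, and coprimality forces $pn\ne qm$ while $\gcd(m,n)\mid pn-qm$, so $\gcd(m,n)$ must be smaller than $n\Psi(m)+m\Psi(n)$ for any overlap to occur; each such overlap has length $\ll\min\{\Psi(m)/m,\Psi(n)/n\}$. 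The delicate point is then to count, for each admissible value of $pn-qm$, the number of pairs of reduced residues $(p,q)$ realizing it — this is a sieve/divisor-sum computation in $\gcd(m,n)$, and it is exactly what produces the Euler-$\varphi$ factors, hence the main term $\left(\sum_{n\le N}\lambda(E_n)\right)^{2}$ rather than the cruder $\left(\sum_{n\le N}\Psi(n)\right)^{2}$; the leftover contributions (from the diagonal $m=n$ and from pairs where $n\Psi(m)+m\Psi(n)$ only just exceeds $\gcd(m,n)$) assemble into the error term $\ll\sum_{n\le N}\Psi(n)$.

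To conclude I would apply the divergence Borel--Cantelli lemma (the Chung--Erd\H{o}s inequality), which gives
\[
\lambda\left(\limsup_n E_n\right)\ \ge\ \limsup_{N\rar\infty}\frac{\left(\sum_{n\le N}\lambda(E_n)\right)^{2}}{\sum_{m,n\le N}\lambda(E_m\cap E_n)} .
\]
Evaluating the right-hand side along the sequence $N\rar\infty$ from the first paragraph: since $\lambda(E_n)\le 2\Psi(n)$ one always has $\sum_{n\le N}\lambda(E_n)\ll\sum_{n\le N}\Psi(n)$, while (\ref{eqn:thmDShyp2}) supplies the reverse inequality up to a constant along this sequence, so there $\sum_{n\le N}\Psi(n)\asymp\sum_{n\le N}\lambda(E_n)$, which is negligible against $\left(\sum_{n\le N}\lambda(E_n)\right)^{2}$ as the latter tends to infinity. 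Hence the $\limsup$ is bounded below by a positive constant and $\lambda(A(\Psi))\ge\lambda(\limsup_n E_n)>0$; a zero--one law of Gallagher for $\limsup$ sets of the form $\limsup_n\{\alpha:\|n\alpha\|\le\Psi(n)\}$ then forces $\lambda(A(\Psi))=1$. I expect the overlap estimate of the second paragraph to be the main obstacle: it is where the arithmetic of $\gcd(m,n)$ must be handled with care, and it is precisely the role of hypothesis (\ref{eqn:thmDShyp2}) to guarantee that the error term it produces can be absorbed at the final step.
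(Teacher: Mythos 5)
The paper does not actually prove this statement: Theorem \ref{thm:DS} is imported verbatim from Duffin and Schaeffer \cite{DS} and used as a black box in Section 5, so there is no internal proof to compare yours against; the comparison is with the classical argument. Your architecture --- reduced-fraction sets $E_n$, a second-moment bound, the Chung--Erd\H{o}s/Kochen--Stone divergence form of Borel--Cantelli, and Gallagher's zero--one law --- is the standard route and can be made to work.

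The genuine gap is your ``technical core'' estimate
\[
\sum_{m,n\le N}\lambda(E_m\cap E_n)\ \ll\ \Bigl(\sum_{n\le N}\lambda(E_n)\Bigr)^{2}+\sum_{n\le N}\Psi(n).
\]
As a statement about arbitrary nonnegative $\Psi$ this is essentially the quasi-independence-on-average assertion whose failure in general is the entire obstruction to the Duffin--Schaeffer \emph{conjecture}; it does not follow from the geometric count you sketch, and the ``sieve/divisor-sum computation producing the Euler-$\varphi$ factors'' is exactly the step that breaks down (the correct general overlap bound, due to Pollington and Vaughan, carries an extra factor of the shape $\exp\bigl(\sum_{p\mid mn/(m,n)^2,\ p>D(m,n)}1/p\bigr)$, which can inflate the second moment relative to $(\sum_{n\le N}\lambda(E_n))^2$). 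Fortunately the theorem as stated does not need any of this. Your geometric count, carried out honestly, yields only the crude bound: for $m\ne n$ an overlap forces $0<|pn-qm|<n\Psi(m)+m\Psi(n)$, each admissible value of $pn-qm$ is realised by at most $(m,n)$ pairs $(p,q)$, and each overlap has length at most $2\min\{\Psi(m)/m,\Psi(n)/n\}$, giving $\lambda(E_m\cap E_n)\le 8\Psi(m)\Psi(n)$ and hence $\sum_{m,n\le N}\lambda(E_m\cap E_n)\ll(\sum_{n\le N}\Psi(n))^2+\sum_{n\le N}\Psi(n)$. This is where hypothesis (\ref{eqn:thmDShyp2}) earns its keep: along the subsequence of $N$ it provides, $\sum_{n\le N}\Psi(n)\asymp\sum_{n\le N}\lambda(E_n)$ --- an observation you already make in your final paragraph --- so the crude second moment is $\ll(\sum_{n\le N}\lambda(E_n))^2$ there, Kochen--Stone gives $\lambda(\limsup_n E_n)>0$, and Gallagher's zero--one law finishes. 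In short: discard the refined overlap estimate, keep the crude one, and invoke (\ref{eqn:thmDShyp2}) at the second-moment step rather than only at the end.
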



\section{Proof of Theorem \ref{thm:2}}
\label{sec:ProofOfTheorem2}

If (\ref{eqn:thm2divcond}) converges then as previously remarked the result of Theorem \ref{thm:2} follows from the Borel-Cantelli Lemma. Therefore we assume that (\ref{eqn:thm2divcond}) diverges. We set $\Psi(n)=\psi(n)/|n|_\D$ and we assume without loss of generality that $\Psi(n)<1/2$ for all but finitely many $n$ (otherwise the conclusion of the theorem is trivial). Then by (\ref{eqn:measuredivcond}) and (\ref{eqn:measequiv1}) we know that (\ref{eqn:thmDShyp1}) is satisfied, so in order to prove Theorem \ref{thm:2} it is sufficient to show that (\ref{eqn:thmDShyp2}) also holds.

First of all we show that there is a universal constant $C>0$ such that
\begin{equation}\label{eqn:philem1}
\sum_{\substack{n=1\\d\nmid n}}^N\frac{\varphi (n)}{n}\ge CN\quad\text{for any }~ d,N\in\N  ~\text{ with }~d\ge 2.
\end{equation}
To verify this we have that
\begin{align}
\sum_{\substack{n=1\\d\nmid n}}^N\frac{\varphi (n)}{n}&=\sum_{n=1}^N\frac{\varphi (n)}{n}-\sum_{\substack{n=1\\d|n}}^N\frac{\varphi (n)}{n}\nonumber\\
&=\sum_{d=1}^N\frac{\mu (d)}{d}\sum_{1\le n\le N/d}1-\sum_{\substack{n=1\\d|n}}^N\frac{\varphi (n)}{n},\nonumber
\end{align}
where $\mu:\N\rar\{\pm 1,0\}$ is the M\"{o}bius function. For the first sum in this expression we use the fact that
\begin{align*}
\sum_{d=1}^N\frac{\mu (d)}{d}\sum_{1\le n\le N/d}1&=N\sum_{d=1}^N\frac{\mu (d)}{d^2}-\sum_{d=1}^N\left\{\frac{N}{d}\right\}\frac{\mu (d)}{d}\\
&=\frac{6N}{\pi^2}-N\sum_{d=N+1}^\infty\frac{\mu (d)}{d^2}-\sum_{d=1}^N\left\{\frac{N}{d}\right\}\frac{\mu (d)}{d}\\
&\ge \frac{6N}{\pi^2}-C_1\log (N+1),
\end{align*}
for some universal constant $C_1>0$. For the second sum we simply use the inequality
\[\sum_{\substack{n=1\\d|n}}^N\frac{\varphi (n)}{n}\le\frac{N}{d}.\]
Together these estimates give
\[\sum_{\substack{n=1\\d\nmid n}}^N\frac{\varphi (n)}{n}\ge \left(\frac{6}{\pi^2}-\frac{1}{d}\right)N-C_1\log (N+1).\]
Now since $d\ge 2$ we have $6/\pi^2-1/d>0$ and therefore there exists an $N_0\in\N$ such that
\[\left(\frac{6}{\pi^2}-\frac{1}{d}\right)N\ge 2C_1\log (N+1)\quad\text{for all }N\ge N_0,\]
which means that
\[\sum_{\substack{n=1\\d\nmid n}}^N\frac{\varphi (n)}{n}\ge\frac{1}{2}\left(\frac{6}{\pi^2}-\frac{1}{2}\right)N\quad\text{for all }N\ge N_0.\]
To take care of the smaller values of $N$ we choose $C_2>0$ so that
\begin{equation}\label{eqn:philem2}
\sum_{\substack{n=1\\d\nmid n}}^N\frac{\varphi (n)}{n}\ge C_2N\quad\text{ for all }N<N_0, d\le N_0.
\end{equation}
This is clearly possible since the summand is always positive and the range of values for both $N$ and $d$ is finite. However if (\ref{eqn:philem2}) holds for all $d\le N_0$ then it also holds for all $d>N_0,$ since the left hand side only depends on $N<N_0$ in those cases. This establishes (\ref{eqn:philem1}) with
\[C=\min\left\{C_2,\frac{1}{2}\left(\frac{6}{\pi^2}-\frac{1}{2}\right)\right\}.\]

For the final part of the proof we have that
\begin{equation}\label{eqn:thm2partialsum1}
	\sum_{n=1}^N \frac{\varphi(n)\psi(n)}{n\infabs{n}_{\mathcal{D}}} \,  = \,  \sum_{n=1}^N
	\left(\psi(n)-\psi(n+1) \right) \sum_{m=1}^n \frac{\varphi(m)}{m\infabs{m}_{\mathcal{D}}}
	+ \psi(N+1)\sum_{m=1}^N \frac{\varphi(m)}{m\infabs{m}_{\mathcal{D}}}.
\end{equation}
We estimate the inner sums here by
\begin{align*}
\sum_{m=1}^n \frac{\varphi(m)}{m\infabs{m}_{\mathcal{D}}}&=\sum_{k=1}^{\M(n)}\sum_{\substack{m=1\\n_k|m,~n_{k+1}\nmid m}}^n\frac{\varphi(m)}{m\infabs{m}_{\mathcal{D}}}\\
&=\sum_{k=1}^{\M(n)}\sum_{\substack{1\le m\le n/n_k\\d_{k+1}\nmid m}}\frac{\varphi(n_km)}{m}\\
&\ge \sum_{k=1}^{\M(n)}\varphi (n_k)\sum_{\substack{1\le m\le n/n_k\\d_{k+1}\nmid m}}\frac{\varphi(m)}{m}\\
&\ge \frac{Cn}{2}\sum_{k=1}^{\M(n)}\frac{\varphi (n_k)}{n_k}.
\end{align*}
By hypothesis (\ref{eqn:thm2.Dhyp}) the last sum here is $\gg \M(n)$ and so by inequality (i) in Lemma \ref{lem:asymp} we have that
\[\sum_{m=1}^n \frac{\varphi(m)}{m\infabs{m}_{\mathcal{D}}}\gg\sum_{m=1}^n\frac{1}{|m|_\D}.\]
This together with (\ref{eqn:thm2partialsum1}) and the monotonicity of $\psi$ gives
\begin{align*}
	\sum_{n=1}^N \frac{\varphi(n)\psi(n)}{n\infabs{n}_{\mathcal{D}}} \,  &\gg \,  \sum_{n=1}^N
	\left(\psi(n)-\psi(n+1) \right) \sum_{m=1}^n \frac{1}{\infabs{m}_{\mathcal{D}}}
	+ \psi(N+1)\sum_{m=1}^N \frac{1}{\infabs{m}_{\mathcal{D}}}\\
&=\sum_{n=1}^N \frac{\psi(n)}{\infabs{n}_{\mathcal{D}}}.
\end{align*}
This shows that (\ref{eqn:thmDShyp2}) is satisfied and we conclude our proof by applying Theorem \ref{thm:DS}.


\section{Concluding remarks}\label{sec:conclusion}
We mentioned in the introduction that hypothesis (\ref{eqn:thm2.Dhyp}) in Theorem \ref{thm:2} is not particularly restrictive. However there are sequences $\D$ for which it fails to hold. To see how one might construct such a sequence, for each $k\ge 0$ let $A_k=2^{k^2}$ and set
\[ 	n_k= \prod_{\, p \, \le A_k} p, \]
where the product is over prime numbers. Then by one of Mertens' Theorems \cite[\S22.7]{HW} we have that
\begin{align*}
\frac{\varphi (n_k)}{n_k}=\prod_{p\le A_k}\left(1-\frac{1}{p}\right)\ll \frac{1}{\log A_k},
\end{align*}
and this implies that
\[\sum_{k=1}^\infty\frac{\varphi (n_k)}{n_k}<\infty.\]
It is clear in this example that if $\D=\{n_k\}$ then (\ref{eqn:thm2.Dhyp}) is not satisfied.

It would be interesting to determine whether or not hypothesis (\ref{eqn:thm2.Dhyp}) can be removed from the proof of Theorem \ref{thm:2}. Indeed another interesting question is to determine whether hypothesis (\ref{eqn:psavthmhyp}) can be removed from the proof of Theorem \ref{thm:psav}. Both of these problems seem to require more than trivial improvements over the techniques which we have presented.

Finally we remark that the ideas in our proof of Theorem \ref{thm:2} can be extended to prove metric results about approximations involving more than one pseudo-absolute value. In particular given two pseudo-absolute value sequences $\D_1$ and $\D_2$ and a monotonic function $\psi:\N\rar\R$ we could give conditions on $\D_1,\D_2,$ and $\psi$ which would guarantee that the inequality
\begin{equation}\label{eqn:metr2seqs}
|n|_{\D_1}|n|_{\D_2}\|n\alpha\|\le\psi (n)
\end{equation}
has infinitely many solutions $n\in\N$ for almost every $\alpha\in\R.$ However the conditions would depend very much on how the sequences $\D_1$ and $\D_2$ intersect. For example if $\D_1=\{2^k\}$ and $\D_2=\{3^k\}$ then by \cite[Theorem 1]{BHV}, inequality (\ref{eqn:metr2seqs}) has infinitely many solutions for almost every $\alpha$ if and only if
\[\sum_{n\in\N}(\log n)^2\psi (n)=\infty.\]
However if $\D_1=\D_2=\{2^k\}$ then by \cite[Theorem 2]{BHV}, the inequality has infinitely many solutions for almost every $\alpha$ if and only if
\[\sum_{n\in\N} n\psi (n)=\infty.\]
This shows that there are two extremes of the problem, and most sequences behave in a way that falls between these two extremes. It doesn't seem readily obvious how to find a nice, tractable divergence condition which will apply in the most general case of metric problems involving more than one pseudo-absolute value. In the case of two pseudo-absolute values this might not be too difficult, but for more than two the problem seems to get complicated quickly.

\providecommand{\bysame}{\leavevmode\hbox to3em{\hrulefill}}
\providecommand{\MR}{\relax\ifhmode\unskip\space\fi MR }
\providecommand{\MRhref}[2]{%
  \href{http://www.ams.org/mathscinet-getitem?mr=#1}{#2}
} \providecommand{\href}[2]{#2}


\begin{thebibliography}{10}

\bibitem{BV} D.~Badziahin and S.~Velani: {\em Multiplicatively badly approximable numbers and generalised Cantor sets}, Pre-print: arXiv:1007.1848, (2010), 1-27.

\bibitem{BW} A.~Baker and G.~W\"{u}stholz: \textit{Logarithmic forms and group varieties}, J. Reine Angew. Math. 442 (1993), 19-62.

\bibitem{Ber} F.~Bernstein: \textit{Über eine anwendung der mengenlehre auf ein aus der theorie der säkularen störungen herrührendes problem}, Math. Ann. 71 (1911), no. 3, 417-439.

\bibitem{Bor} E.~Borel: \textit{Les probabilit\'{e}s d\'{e}nombrables et leurs applications arithm\'{e}tiques}, Rend. Circ. Mat. Palermo 27 (1909), 247-271.

\bibitem{BLMV} J.~Bourgain, E.~Lindenstrauss, P.~Michel and A.~Venkatesh: \textit{Some effective results for $\times a\times b$}, Ergodic Theory Dynam. Systems 29 (2009), no. 6, 1705-1722.

\bibitem{BDdM} Y.~Bugeaud, M.~Drmota and B.~de Mathan: \textit{On a mixed Littlewood conjecture in Diophantine approximation}, Acta Arith. 128 (2007), no. 2, 107-124.

\bibitem{BHV}	Y.~Bugeaud, A.~Haynes and S.~Velani: \textit{Metric considerations concerning the mixed Littlewood conjecture}, International Journal of Number Theory, to appear, Pre-print: arXiv:0909.3923, (2009), 1-17.

\bibitem{DS}
R.~J.~Duffin and A.~C.~Schaeffer: \emph{Khintchine's problem in metric Diophantine
approximation},  Duke J. 8 (1941), 243-255.

\bibitem{EFS} M.~Einsiedler, L.~Fishman and U.~Shapira: \textit{Diophantine approximations on fractals}, GAFA, to appear, Pre-print: arXiv:0908.2350, (2009), 1-20.

\bibitem{EKL} M.~Einsiedler, A.~Katok and E.~Lindenstrauss: \textit{Invariant measures and the set of exceptions to Littlewood's conjecture}, Annals of Math. 164 (2006), 513-560.

\bibitem{EK}	M.~Einsiedler and D.~Kleinbock: \textit{Measure rigidity and $p$-adic Littlewood-type problems}, Compositio Math.\ 143 (2007), 689-702.


\bibitem{Fur} H.~Furstenberg: \textit{Disjointness in ergodic theory, minimal sets, and a problem in Diophantine approximation}, Math. Systems Theory 1 (1967), 1-49.

\bibitem{Gal}	P.~Gallagher: \textit{Metric simultaneous Diophantine aproximations}, J.\ London Math.\ Soc.\ 37 (1962), 387-390.

\bibitem{HW} G.~H.~Hardy and E.~M.~Wright: \textit{An Introduction to the Theory of Numbers}, Oxford University Press, Oxford, 1980.



\bibitem{Lin} E.~Lindenstrauss: {\em Invariant measures and arithmetic quantum unique ergodicity}, Ann. of Math. (2) 163 (2006), no. 1, 165-219.


\bibitem{dMat} B.~de Mathan: \textit{On a mixed Littlewood conjecture for quadratic numbers}, J. Théor. Nombres Bordeaux 17 (2005), no. 1, 207-215.

\bibitem{dMT} B.~de Mathan and O.~Teulié: \textit{Problèmes Diophantiens simultanés}, Monatsh. Math. 143 (2004), 229-245.

\bibitem{Phe} R.~Phelps: \textit{Lectures on Choquet's theorem, Second edition}, Lecture Notes in Mathematics, 1757, Springer-Verlag, Berlin, 2001.

\bibitem{Wal} P.~Walters: \textit{An introduction to ergodic theory}, Graduate Texts in Mathematics, 79, Springer-Verlag, New York-Berlin, 1982.

\end{thebibliography}
\end{document}